 \numberwithin{dummy}{section}
\renewcommand{\Re}{\operatorname{Re}}
\renewcommand{\Im}{\operatorname{Im}}
\numberwithin{equation}{section}
\newcommand{\mc}[1]{\mathcal{#1}}
\theoremstyle{plain}
\newtheorem{theorem}{Theorem}
\newtheorem*{theorem*}{Theorem}
\newtheorem*{lemma*}{Lemma}
\newtheorem{lemma}{Lemma}[section]
\newtheorem*{conjecture*}{Conjecture}
\newtheorem{proposition}{Proposition}[section]
\theoremstyle{definition}
\theoremstyle{remark}
\newcommand{\ep}{\varepsilon}
\newcommand{\NN}{\mathbb{N}}
\newcommand{\R}{\mathbb{R}}
\newcommand{\US}{\mathbb{S}}
\newcommand{\C}{\mathbb{C}}
\newcommand{\dom}{\mathcal{D}}
\newcommand\esssup{\mathop{\rm ess\,sup}}
\newcommand\esssupp{\mathop{\rm ess\,supp}}
\newcommand\supp{\mathop{\rm supp}}
\newcommand\real{\mathop{\rm Re}}
\newcommand\imag{\mathop{\rm Im}}
\newcommand\Ai{\textrm{Ai}}
\newcommand\Bi{\textrm{Bi}}
\newcommand*{\defeq}{\mathrel{\vcenter{\baselineskip0.5ex \lineskiplimit0pt

                     \hbox{\scriptsize.}\hbox{\scriptsize.}}}%
                     =}
\newcommand{\jg}[1]{{\textcolor{purple}{#1}}}
\newcommand{\e}{\epsilon}
\newcommand{\loc}{\operatorname{loc}}
\newcommand{\comp}{\operatorname{comp}}
                     \title{Semiclassical resolvent bounds for compactly supported radial potentials}
\author{Kiril Datchev}
\address{Department of Mathematics, Purdue University,  West Lafayette, IN, 47907-2067, USA}
\email{kdatchev@purdue.edu}
\author{Jeffrey Galkowski}
\address{Department of Mathematics, University College London, London, WC1H 0AY, UK}
\email{j.galkowski@ucl.ac.uk}
\author{Jacob Shapiro}
\address{Department of Mathematics, University of Dayton, Dayton, OH 45469-2316, USA}
\email{jshapiro1@udayton.edu}
\begin{document}
\begin{abstract}
We employ separation of variables to prove weighted resolvent estimates for the semiclassical Schr\"odinger operator $-h^2 \Delta + V(|x|) - E$ in dimension $n \ge 2$, where $h, \, E > 0$, and $V: [0, \infty) \to \R$ is $L^\infty$ and compactly supported. The weighted resolvent norm grows no faster than $\exp(Ch^{-1})$, while an exterior weighted norm grows $\sim h^{-1}$. We introduce a new method based on the Mellin transform to handle the two-dimensional case. 

\end{abstract}
\maketitle

\section{Introduction and statement of results}
Let $\Delta \defeq \sum_{j=1}^n \partial^2_{x_j}$ be the Laplacian on $\mathbb{R}^n$, $n \ge 2$. We consider the semiclassical Schr\"odinger operator on $L^2(\mathbb R^n)$ given by
\begin{equation*}
Pu = P(h)u \defeq -h^2 \Delta u + V(|x|)u,
\end{equation*}
where $V\colon [0,\infty) \to \mathbb R$ is $L^\infty$ and compactly supported, and $h>0$ is a semiclassical parameter. Then $P \colon H^2(\mathbb{R}^n) \to L^2(\mathbb R^n)$ is self-adjoint, and the resolvent $(P - z)^{-1}$ is bounded  on $L^2(\mathbb{R}^n)$ for all $z \in \mathbb{C} \setminus \mathbb{R}$. Throughout the article, we let $r\defeq |x|$, and  $\langle x \rangle \defeq (1 + |x|^2)^{1/2}$.

Our first result is an exponential upper bound on the limiting absorption resolvent.
\begin{theorem} \label{exp thm}
Let $n \ge 2$. Fix $[E_{\emph{min}}, E_{\emph{max}}] \subseteq (0, \infty) $ and $1/2 < s \le 1$. There exist  $C, \, h_0 > 0$, such that
\begin{equation} \label{radial exp est}
\|\langle x \rangle^{-s}  (P - E - i \ep)^{-1} \langle x \rangle^{-s}  \|_{L^2(\R^n) \to L^2(\R^n) } \le e^{C/h},
\end{equation}
for all $\ep > 0$, $h \in (0,h_0]$, and $E\in [E_{\emph{min}}, E_{\emph{max}}]$. 
\end{theorem}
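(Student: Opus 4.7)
My plan is to prove the theorem by separation of variables and a reduction to uniform one-dimensional resolvent bounds. Since $V$ depends only on $r=|x|$, the operator $P$ commutes with the action of $\mathrm{SO}(n)$, so $L^2(\mathbb{R}^n)$ decomposes orthogonally into angular momentum sectors indexed by $\ell \geq 0$. After conjugating the radial factor by $r^{(n-1)/2}$, $P$ acts on each sector as the one-dimensional Schr\"odinger operator
\begin{equation*}
P_\ell \defeq -h^2\partial_r^2 + h^2\frac{\nu_\ell^2 - 1/4}{r^2} + V(r)
\quad \text{on} \quad L^2((0,\infty), dr),
\qquad \nu_\ell \defeq \ell + \tfrac{n-2}{2}.
\end{equation*}
Because $\langle r\rangle^{-s}$ commutes with this decomposition, the theorem reduces, via Pythagoras, to a uniform one-dimensional estimate
\begin{equation*}
\bigl\|\langle r\rangle^{-s}(P_\ell - E - i\ep)^{-1}\langle r\rangle^{-s}\bigr\|_{L^2(0,\infty) \to L^2(0,\infty)} \leq e^{C/h},
\end{equation*}
with $C$ independent of $\ell \geq 0$, $\ep > 0$, and $E \in [E_{\min}, E_{\max}]$.

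For this one-dimensional bound I would split the analysis according to the size of $h\nu_\ell$. When $h\nu_\ell \geq M$ for a fixed, sufficiently large $M$, the effective potential $V(r) + h^2\nu_\ell^2/r^2 - E$ is strictly positive (the centrifugal barrier dominates both $V$ and $E$), so a positive-commutator argument with a radial escape function $a(r)\partial_r + \partial_r a(r)$ produces an $O(h^{-1})$ resolvent bound, comfortably inside $e^{C/h}$. When $h\nu_\ell \leq M$, the centrifugal term is a bounded perturbation on $\{r \geq r_0\}$ and the remaining potential $V - E$ is bounded and compactly supported; on such a region one can then run a standard one-dimensional semiclassical Carleman estimate with weight $e^{\varphi/h}$, where $\varphi$ is built from a suitable Agmon distance for $V - E$, to close the exponential bound.

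The principal obstacle is the behavior near $r = 0$. For $n \geq 3$, or for $n = 2$ with $\ell \geq 1$, one has $\nu_\ell^2 - 1/4 \geq 0$ and the Hardy inequality absorbs the singular term into the Carleman energy. In dimension $n = 2$ with $\ell = 0$, however, $\nu_0^2 - 1/4 = -1/4$ and the inverse-square singularity is \emph{attractive}, which no Carleman weight on $(0,\infty)$ can tame. To treat this mode I would exploit the scale invariance of the model operator $-h^2\partial_r^2 - h^2/(4r^2)$: on a neighborhood $(0, r_0)$ of the origin the Mellin transform in $r$ diagonalizes the model, and a direct contour argument in the Mellin variable yields a resolvent estimate for the model problem independent of $h$. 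Patching this near-origin estimate to the Carleman estimate valid on $(r_0/2, \infty)$ via cutoffs and commutator corrections will then give the bound for the zeroth mode in two dimensions. This Mellin step is the new ingredient announced in the abstract and will be the main technical innovation of the proof.
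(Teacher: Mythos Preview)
Your overall architecture matches the paper: separation of variables, splitting by the size of the angular momentum, a Carleman/energy argument for low modes, and the Mellin transform to handle the attractive inverse-square singularity when $n=2$, $\ell=0$. The paper in fact runs the Mellin step uniformly for all $m \ge -h^2/4$, not only for the single bad mode, but your restricted use of it would also suffice.

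The genuine gap is in the large angular momentum regime. Your claim that for $h\nu_\ell \ge M$ the effective potential $V(r)+h^2\nu_\ell^2 r^{-2}-E$ is strictly positive is false: since $V$ is compactly supported, the effective potential tends to $-E<0$ as $r\to\infty$, regardless of how large $\nu_\ell$ is. What \emph{is} true for $m$ large is that the effective potential is nonnegative on $(0,R_0]$, so the dynamics are nontrapping; but ``nontrapping'' and ``effective potential positive everywhere'' are different statements, and only the latter would directly feed a trivial positivity argument. Moreover, a positive-commutator scheme with an escape function $A=a\partial_r+\partial_r a$ requires computing $[P_m,A]$, which produces a term $2aV'$; for $V\in L^\infty$ this derivative does not exist, and integrating it by parts yields $a'V$-type terms of indeterminate sign that spoil the commutator positivity. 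This is exactly the obstruction that forces the $e^{Ch^{-4/3}\log(1/h)}$ bound for general $L^\infty$ potentials.

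The paper circumvents this by abandoning commutators altogether for large $m$ and instead working directly with the resolvent kernel $K(r,r')=-u_0(r)u_1(r')/(h^2W)$. The reality of $V$, together with $V+mr^{-2}-E\ge 0$ on $(0,R_0]$, yields $u_0,u_0'\ge 0$ there (a convexity/monotonicity argument for real second-order ODE), and explicit Bessel and Airy asymptotics control $u_0,u_1$ and $W$ on $[R_0,\infty)$. Combining these gives $|K(r,r')|\le C/h^2$ uniformly in large $m$. You will need an argument of this type (or some other device that genuinely exploits reality of $V$) to close the large-$m$ estimate; the escape-function route as you describe it does not work for $L^\infty$ potentials.
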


In addition, we prove a `non-trapping' type estimate for the resolvent in the exterior of a large ball. Let 
$$
R_0 = R_0(V) \defeq \sup\{ r\in [0,\infty)\,:\, r\in \esssupp V\},
$$ 
and define
\begin{equation} \label{M 0 condition}
\begin{split}
M_0 &= M_0(V,E) \defeq \\
 &\inf \{m >0 \mid  V(r) + m r^{-2} - E \ge 0, \text{ for almost all }  r \text{ in a neighborhood of $(0,R_0(V)]$}\}.
\end{split}
\end{equation}
For example, if $V$ is the characteristic function of the interval $(0, R_0]$, then $M_0 = ER^2_0$.  If $V$ is continuous at $R_0$, $M_0 =  \esssup_{[0,R_0]}r^2(E - V)$. Note that always $M_0 \ge E R_0^2$ because we require $M_0r^{-2}-E \ge 0$ for some $r > R_0$. 
Finally, put 
\begin{equation} 
\label{e:r1} R_1 = R_1(V, E) \defeq \sqrt{M_0(V,E)/E}.
\end{equation}
\begin{theorem} \label{ext thm}
Let $n \ge 2$. Fix $[E_{\emph{min}}, E_{\emph{max}}] \subseteq (0, \infty) $, $1/2 < s \le 1$ and $R> \sup_{E\in [E_{\min},E_{\max}]} R_1(V, E)$. There exist $C, \, h_0 > 0$, such that
\begin{equation} \label{radial ext est}
\|\langle x \rangle^{-s} \mathbf{1}_{\ge R} (P - E - i \ep)^{-1}\mathbf{1}_{\ge R} \langle x \rangle^{-s}  \|_{L^2(\R^n)  \to L^2(\R^n) } \le \frac{C}{h},
\end{equation}
for all $\ep > 0$, $h \in (0,h_0]$, and $E\in [E_{\emph{min}}, E_{\emph{max}}]$, where $\mathbf{1}_{\ge R}$ is the characteristic function of $\{x \in \mathbb R^n \colon |x| \ge R\}$. 
\end{theorem}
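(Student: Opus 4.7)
The plan is to separate variables via spherical harmonics and reduce the bound to a family of one-dimensional estimates, uniform in the angular momentum $\ell$. Writing $L^2(\R^n) = \bigoplus_{\ell \ge 0} L^2((0,\infty), r^{n-1}dr) \otimes \mathcal Y_\ell$, where $\mathcal Y_\ell$ denotes the space of degree-$\ell$ spherical harmonics, and conjugating by $r^{(n-1)/2}$, the operator $P$ becomes a direct sum of one-dimensional Schr\"odinger operators
\[
\tilde P_\ell = -h^2 \partial_r^2 + V(r) + \frac{h^2 \nu_\ell}{r^2}, \qquad \nu_\ell := \ell(\ell+n-2) + \tfrac{(n-1)(n-3)}{4},
\]
on $L^2((0,\infty), dr)$. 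Since $\mathbf 1_{\ge R}$ and $\langle r\rangle^{-s}$ commute with these projections, orthogonality reduces \eqref{radial ext est} to the $\ell$-uniform estimate
\[
\|\mathbf 1_{\ge R} \langle r\rangle^{-s}(\tilde P_\ell - E - i\ep)^{-1}\mathbf 1_{\ge R} \langle r\rangle^{-s}\|_{L^2\to L^2} \le C/h.
\]

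I would establish the 1D bound via a Morawetz positive-commutator identity. Because $R > R_1(V,E) \ge R_0$, the potential $V$ vanishes on $\{r \ge R\}$, so the effective potential there is $W_\ell(r) = h^2\nu_\ell/r^2$, which is non-increasing whenever $\nu_\ell \ge 0$. Choose a smooth, bounded, non-decreasing $\phi_\ell \ge 0$ supported in $[R_*, \infty)$ for some $R_* \in (R_1, R)$, with $\phi_\ell'(r) \gtrsim \langle r\rangle^{-2s}$ on $[R, \infty)$. Denoting by $u_\ell$ the 1D resolvent applied to a suitably weighted, cut-off source $f_\ell$, pairing $(\tilde P_\ell - E - i\ep) u_\ell = f_\ell$ with $\phi_\ell \partial_r u_\ell$ and taking real parts yields, after integration by parts, the identity
\[
\int \phi_\ell'\bigl(h^2|u_\ell'|^2 + (E - W_\ell)|u_\ell|^2\bigr)\,dr + \int (-\phi_\ell W_\ell')\,|u_\ell|^2\,dr = 2\,\Re\langle \phi_\ell \partial_r u_\ell, f_\ell\rangle + \text{(small $\ep$-terms)}.
\]
For ``subcritical'' modes $h^2\nu_\ell \le M_0$, one has $E - W_\ell \ge E(1 - R_1^2/R^2) > 0$ on the support of $\phi_\ell'$, making both left-side integrals non-negative; Cauchy-Schwarz applied to the right (with an $h$-dependent tuning that balances the kinetic against the source) then produces the desired $C/h$ bound.

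The key difficulty is uniformity in $\ell$, especially in two borderline regimes. ``Supercritical'' modes $h^2\nu_\ell > M_0$ admit a turning point $\sqrt{h^2\nu_\ell/E}$ inside $[R,\infty)$, so the Morawetz identity must be complemented by an Agmon/energy estimate in the classically forbidden region $\{W_\ell > E\}$ to propagate the bound across the turning point and control boundary contributions. The most delicate case, and the crux of this paper's innovation, is $n = 2$, $\ell = 0$: then $\nu_0 = -1/4 < 0$ makes $W_0$ attractive with $W_0' > 0$, reversing the sign of the $-\phi_0 W_0'$ term and invalidating the standard multiplier choice. This case calls for the Mellin transform technique announced in the abstract, exploiting the dilation-invariance of $-h^2\partial_r^2 - h^2/(4r^2)$ (which becomes a Fourier multiplier after the logarithmic substitution $r = e^t$) to diagonalize the operator and derive the resolvent bound directly on the Mellin side. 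Once these $\ell$-uniform 1D estimates are in hand, summation via Plancherel on $S^{n-1}$ assembles them into the full bound \eqref{radial ext est}.
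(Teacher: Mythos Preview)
Your separation-of-variables reduction matches the paper, and a Morawetz multiplier on $[R_*,\infty)$ is a plausible alternative to the paper's approach for the low-angular-momentum modes. But there are two real problems. First, you have misplaced the Mellin transform: in the paper it appears in Section~\ref{s:low} to prove Proposition~\ref{p:1DResolve}, which feeds Theorem~\ref{exp thm}, and it plays no role in Theorem~\ref{ext thm}. The exterior estimate lives entirely on $r \ge R_* > R_0$, so the attractive term $-h^2/(4r^2)$ in the $n=2$, $\ell=0$ mode is a bounded $O(h^2)$ perturbation; since $|W_0'| = h^2/(2r^3) \lesssim h^2 \langle r\rangle^{-2s}$ on $\supp\phi_0$ while $E - W_0 = E + h^2/(4r^2) \ge E$, the wrong-sign contribution $\tfrac12\int \phi_0 W_0'\,|u|^2$ is absorbed by the main Morawetz term for small $h$. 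Nothing is delicate about this mode here; you have confused the difficulty near $r=0$ (relevant to Theorem~\ref{exp thm}) with the exterior region.

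Second, and this is the genuine gap, your treatment of the supercritical modes $h^2\nu_\ell > M_0$ is only a gesture. The turning point $r_\ell = (h^2\nu_\ell/E)^{1/2}$ can lie anywhere in $(R_1,\infty)$ and tends to infinity with $\ell$; on $[R, r_\ell)$ the coefficient $E - W_\ell$ is negative and your identity loses positivity. ``Complement by an Agmon estimate and propagate across the turning point'' does not explain how to obtain a bound uniform in $\ell$ with the sharp constant $C/h$: the forbidden region has length growing with $\ell$, and matching Agmon decay to an allowed-region estimate across a moving turning point is precisely the hard step you have not carried out. The paper avoids commutator methods for Theorem~\ref{ext thm} entirely and instead bounds the kernel $K(r,r')$ pointwise. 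For $m \le M_+$ (Lemma~\ref{Olver lemma}) it writes $u_0,u_1$ via WKB/Liouville--Green asymptotics on $[R,\infty)$, where there is no turning point, and reads off $|K|\lesssim h^{-1}$ from the Wronskian. For $m > M_+$ (Lemma~\ref{l:rbigmbig}) it expands $u_0 = r^{1/2}(A J_\nu + B Y_\nu)$ on $r \ge R_0$, uses the monotonicity $u_0, u_0' \ge 0$ on $(0,R_0]$ (valid because $V + mr^{-2} - E \ge 0$ there, Lemma~\ref{nonnegativity}) to force $|B| \lesssim e^{-2\nu\xi_0} A$, and then applies Olver's uniform-in-$\nu$ Bessel asymptotics to conclude $|K(r,r')| \lesssim h^{-1}$.
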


Theorem \ref{ext thm} is optimal in the sense that \cite[Theorem 3]{daji20} shows that \eqref{radial ext est} is false in general when $R<\sqrt{M_0/E}$. For example, if $V \in C_0^\infty([0,1))$, $E < \max (-r^2 V) $, and $R<\sqrt{M_0/E}$, then (by \cite[(2.9) and (4.10)]{daji20}) the left hand side of \eqref{radial ext est} is bounded below by $e^{1/Ch}$ for $h$ tending to zero along a sequence of positive values.

The novelty of Theorems \ref{exp thm}  and \ref{ext thm} is that they bound the weighted resolvent for an arbitrary compactly supported, radial $L^\infty$ potential. The $h$-dependencies on the right sides of \eqref{radial ext est} and \eqref{radial exp est} are sharp in general, see \cite{ddz15} and \cite{daji20} for exponential lower bounds, and recall that the free resolvent ($V \equiv 0$) has a $Ch^{-1}$ lower bound (to see this, consider $u=e^{i\sqrt{E} h^{-1} x_1}\chi(x)$ for some $ 0 \not \equiv \chi \in C_0^\infty(\mathbb{R}^n)$). 

Vodev's work \cite{vo21} shows, for dimension $n \ge 3$, a bound like \eqref{radial exp est} still holds for radial potentials decaying like $\langle r \rangle^{-\delta}$, $\delta > 2$, except with the right side replaced by $e^{Ch^{-4/3}}$ (bounds with additional losses hold for $V$ decaying more slowly). For $V \in L^\infty(\R^n;\R)$ not necessarily radial, $n \ge 2$, with $V  = O(\langle r \rangle^{-\delta})$, $\delta > 2$, the best known weighted resolvent upper bound is $e^{Ch^{-4/3}\log(h^{-1})}$ \cite{gash20}. In dimension $n \ge 2$, it is an open problem to determine the optimal $h$-dependence of the resolvent for $V \in L^\infty$. In contrast, when $n = 1$, an $e^{Ch^{-1}}$ bound holds even if $V \in L^1(\R; \R)$ \cite{dash20}. As far as the authors are aware, Theorem~\ref{ext thm} is the first exterior estimate for any class of $L^\infty$ potentials in dimension higher than one.

Proofs of semiclassical resolvent estimates have a long history and are an active research topic. Burq \cite{bu98} was the first to show an $e^{Ch^{-1}}$ bound for smooth perturbations of the Laplacian on $\R^n$. Several extensions followed \cite{vo00, bu02, sj02, cavo02}. The exterior bound \eqref{radial ext est} was first proved by Cardoso and Vodev \cite{cavo02}, refining a preliminary estimate of Burq \cite{bu02}. More recent works on resolvent estimates in lower regularity include \cite{da14, vo14, rota15, klvo19, sh19, vo19, gash20, gash21, sh20, vo20a, vo20b, vo20c, vo21}.

Stronger bounds on the resolvent are known when $V \in C^\infty_0(\R^n ; \R)$ and conditions are imposed on the classical flow $\Phi(t) = \exp t(2 \xi \partial_x - \partial_x V(x) \partial_\xi)$ (note that $\Phi(t)$ may be undefined in our case). The key dynamical object is the \textit{trapped set} $\mathcal{K}(E)$ at energy $E > 0$, defined as the set of $(x, \xi) \in T^*\R^n$ such that $|\xi|^2 + V(x) = E$ and $|\Phi(t)(x, \xi)|$ is bounded as $|t| \to \infty$. If $\mathcal{K}(E) = \emptyset,$ that is, if $E$ is \textit{nontrapping}, Robert and Tamura \cite{rota87} showed the weighted resolvent is bounded by $Ch^{-1}$. We may thus think of \eqref{radial ext est} as a low regularity analog; it says that applying cutoffs supported sufficiently far from zero removes the losses from \eqref{radial exp est} due to trapping.

Theorem \ref{exp thm} allows one to obtain, at high frequency, bounds on and resonance free regions for, the meromorphic continuation of the cutoff resolvent of the operator $-c^2(|x|) \Delta$ on $L^2(\R^n, c^{-2}dx)$. Here $c \in L^\infty([0,\infty); (0,\infty))$ is called the \textit{wavespeed} and satisfies 
\begin{gather}
\supp(1 - c) \text{ is compact}, \label{one minus c}  \\
c_0 < c(r) < c_1 \text{ for some $c_0, c_1 > 0$ and for all $r \in [0,\infty)$.} \label{bd abv blw}
\end{gather}
More precisely, one obtains, 
\begin{theorem} \label{res free}
Let $n \ge 2$, and suppose $c \in L^\infty([0,\infty); (0,\infty))$ obeys \eqref{one minus c} and \eqref{bd abv blw}. For each $\chi \in C^\infty_0(\R^n)$, there exist constants $C_1, C_2, M > 0$ such that the cutoff resolvent \\$\chi R(\lambda) \chi \defeq \chi (-c^2\Delta - \lambda^2)^{-1} \chi$ continues analytically from $\imag \lambda > 0$ into the set\\$\{ \lambda \in \mathbb{C} : |\real \lambda | > M, \, \imag \lambda > -e^{C_2|\real \lambda|} \}$, where it satisfies the bound
\begin{equation}
\| \chi R(\lambda) \chi \|_{L^2(\R^n ,\, c^{-2}dx) \to L^2{(\R^n ,\,c^{-2}dx)}} \le e^{C_1 |\real \lambda|}. \label{exp high energy est}
\end{equation}
\end{theorem}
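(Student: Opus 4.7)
The plan is to reduce Theorem~\ref{res free} to Theorem~\ref{exp thm} via a semiclassical rescaling, and then apply the semiclassical maximum principle of Burq \cite{bu98} to promote the resulting real-axis exponential bound into the stated resonance-free region with polynomial loss.

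First, I would set $V(r) \defeq 1-c^{-2}(r)$ and $P(h) \defeq -h^2\Delta + V$. By \eqref{one minus c} and \eqref{bd abv blw}, $V$ is real-valued, radial, $L^\infty$, and compactly supported, so Theorem~\ref{exp thm} applies to $P(h)$. Writing $\lambda = z/h$ with $h = 1/\real\lambda$ and $z = h\lambda$, a direct algebraic manipulation gives
$$-c^2\Delta - \lambda^2 = h^{-2}c^2\bigl(P(h) - z^2 - (z^2-1)\phi\bigr), \qquad \phi \defeq c^{-2}-1,$$
where $\phi$ is also $L^\infty$ and compactly supported. For $\lambda>0$ real and large ($z=1$), Theorem~\ref{exp thm} with $E=1$ and $s=1$, after sending $\ep\searrow 0$ and using $\|\chi u\|_{L^2} \le \|\langle x\rangle\chi\|_{L^\infty}\|\langle x\rangle^{-1}u\|_{L^2}$ for $\chi\in C^\infty_0(\R^n)$, yields $\|\chi(P(h)-1)^{-1}\chi\|_{L^2\to L^2} \le e^{C/h}$; combined with the norm equivalence of $L^2(dx)$ and $L^2(c^{-2}dx)$ from \eqref{bd abv blw}, this produces $\|\chi R(\lambda)\chi\|_{L^2(c^{-2}dx)\to L^2(c^{-2}dx)} \le e^{C_1|\real\lambda|}$ on the real axis.

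Next, in the upper half-plane, self-adjointness of $-c^2\Delta$ on $L^2(c^{-2}dx)$ gives $\|R(\lambda)\|\le 1/\imag(\lambda^2) = 1/(2\real\lambda\,\imag\lambda)$, so $\|\chi R(\lambda)\chi\| = O(1)$ on $\imag\lambda = ch$ for any fixed $c>0$. Meromorphic continuation of $\chi R(\lambda)\chi$ from $\imag\lambda>0$ to a region in the lower half plane (to $\C$ when $n$ is odd, to the logarithmic cover when $n$ is even) is standard for compactly supported perturbations, e.g.\ via Lippmann--Schwinger. I would then apply the Burq semiclassical maximum principle \cite{bu98} to matrix elements $\langle\chi R(\lambda)\chi u,v\rangle$ on rectangles $\Omega_h$ centered at $\lambda_0 = 1/h$ with top edge $\imag\lambda = ch$ and bottom edge placed at the boundary of the target region: the $O(1)$ bound on the top edge combined with an $e^{C/h}$ bound on the remainder of $\Omega_h$ bootstraps, via a Phragm\'en--Lindel\"of argument, to $\|\chi R(\lambda)\chi\| \le e^{C_1/h}$ throughout $\Omega_h$ and rules out poles. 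Unioning rectangles over $\lambda_0 > M$ yields the stated region together with the bound \eqref{exp high energy est}.

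The delicate step is extending the Theorem~\ref{exp thm} bound from the single value $z=1$ to a $z$-strip of width comparable to $h$, as the maximum principle requires exponential-type control on all of $\Omega_h$ rather than only on the real axis. The naive Neumann series in the compactly supported perturbation $(z^2-1)\phi$ converges only for $|z^2-1|\ll e^{-C/h}$, far too small. One circumvents this by applying Theorem~\ref{exp thm} directly with the variable real energy $E = z^2$ in a fixed compact interval around $1$ (uniformly valid by the theorem with $E_{\min}<1<E_{\max}$), and controlling complex $z$ near the real axis via the spectral-theorem bound $\|R(\lambda)\|\le 1/\imag(\lambda^2)$ in the regime $|\imag z|\lesssim h$; the two estimates meet and combine to give the required exponential bound on $\Omega_h$, which is the one place in the argument where the specific strength of Theorem~\ref{exp thm} (as opposed to weaker weighted bounds like $e^{Ch^{-4/3}}$) directly determines the width $C_2$ of the resulting resonance-free region.
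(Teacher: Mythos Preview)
Your approach is essentially the same as the paper's: the paper simply identifies $V = 1 - c^{-2}$, $h = |\real\lambda|^{-1}$, applies Theorem~\ref{exp thm}, and refers to \cite[Proposition~5.1]{sh18}, which is exactly the Burq-type semiclassical maximum principle argument you outline.

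One clarification on your final paragraph: the semiclassical maximum principle does \emph{not} require you to first extend the exponential bound from the real axis to a full rectangle $\Omega_h$ before applying Phragm\'en--Lindel\"of. The inputs are (i) the $e^{C/h}$ bound on the real segment, (ii) the $O(h^{-1})$ bound on the top edge from the spectral theorem, and (iii) an \emph{a priori} polynomial bound $|\lambda|^{N}$ on $\chi R(\lambda)\chi$ throughout $\Omega_h$ away from poles, which comes directly from the meromorphic continuation theory for compactly supported perturbations (e.g.\ the black-box framework or Lippmann--Schwinger with the free resolvent bounds). The three-lines argument then interpolates these to produce both the pole-free region and the bound. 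So the ``delicate step'' you describe---perturbing in $(z^2-1)\phi$ or varying $E$ to reach complex $z$---is not actually needed; your worry there is misplaced, and the argument is in fact more robust than you suggest.
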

The proof of Theorem \ref{res free} is the same as the proof of \cite[Proposition 5.1]{sh18}, and is seen by identifying $V = 1 - c^{-2}$, $h = |\real \lambda|^{-1}$ and applying \eqref{radial exp est}.
 
Theorem \ref{res free} implies logarithmic local energy decay for the wave equation
\begin{equation} \label{wave equation}
\begin{cases}
(\partial_t^2 - c^2(x)\Delta) u(x,t) = 0, & (x,t) \in \R^n  \times (0, \infty), \, n \ge 2, \\
 u(x,0) = u_0(x) \in H^2(\R^n),\\
 \partial_t u(x,0) = u_1(x) \in H^1(\R^n),
\end{cases}
\end{equation}
where the initial data are compactly supported. Such a decay rate was first proved by Burq \cite{bu98} for $c$ smooth, and allowing for a smooth Dirichlet obstacle.

The assumption that $c =1$ outside of a compact set is necessary not only to establish \eqref{exp high energy est}, but also to study the low-frequency behavior of the cutoff resolvent. Under this assumption, one can see that this behavior is exactly the same as the case $c \equiv 1$ \cite[Proposition 4.1]{sh18}, which in turn is well-known (see, e.g., \cite[Section 1.1]{vo01}). With both the low and high frequency behavior of the $\chi R(\lambda) \chi$ illuminated, a logarithmic local energy decay rate for the solution of \eqref{wave equation} follows exactly as in \cite[Sections 6 and 7]{sh18}.

\subsection{Ideas from the proofs}
When $V : \R^n \to \R$ has limited regularity, proofs of resolvent estimates typically proceed by a modified positive commutator strategy, see e.g., \cite[Section 2]{gash20}. The potential is treated as a perturbation because it cannot be differentiated in some or all directions.  The issue one needs to overcome is that, in the positive commutator scheme, the potential appears in a term without an apparent sign (even though $V$ is real-valued), which in turn is difficult to control as $h \to 0^+$. Controlling this term results in an estimate from above by $e^{Ch^{-4/3}\log(h^{-1})}$.

Due to the work of Meshkov~\cite{me92}  on the Landis conjecture, one knows that the exponent $h^{-4/3}$ is optimal for compactly supported complex valued potentials (see Appendix~\ref{s:mesh} for an explanation). Therefore, any improvement upon the exponent $h^{-4/3}$ in the resolvent estimate for an $L^\infty$ compactly supported potential must involve additional assumptions on the potential $V$ (e.g. reality/radiality).

The radial symmetry of the potential is used in two ways. First, it allows us to study the high angular momenta separately from the low angular momenta. In particular, we use a spherical energy type estimate to obtain resolvent estimates for low frequencies. Second, decomposition by angular momentum enables us to take advantage of reality of $V$ using ODE techniques. In particular, for large enough angular momenta $m_j$, reality of $V$ yields useful monotonicity properties of certain solutions $u_0$ and $u_1$ to $(-h^2 \partial^2_r + V + m_jr^{-2} - E)u = 0$ (see their construction in Section \ref{reduction section}). Control of $u_0$ and $u_1$ gives a bound on the integral kernel of $(-h^2 \partial^2_r + V + m r^{-2} - E - i0)^{-1}$, which together with WKB and Bessel function asymptotics, yields the sharp semiclassical resolvent estimates.

We remark that, in the setting of Theorem \ref{exp thm}, our ODE methods do not yield estimates on $u_0$ and $u_1$ for small $m_j$. However, we are able to prove good enough resolvent estimates for these frequencies.  
In doing so, we prove the following proposition.
\begin{proposition}
\label{p:1DResolve}
Fix $[E_{\min},E_{\max}]\subseteq (0,\infty)$ and $1/2 < s \le 1$. There exist $C, \, h_0>0$ such that for $0\leq \ep \leq 1$, $h \in (0, h_0]$, $m\geq -h^2/4$, and $E\in [E_{\min},E_{\max}]$,
\begin{equation}
\label{vod exp}
\|\langle r \rangle^{-s}(-h^2 \partial^2_r + V + m r^{-2} - E - i\ep)^{-1}\langle r \rangle^{-s} \|_{L^2(0,\infty) \to L^2(0, \infty)}\leq e^{C(1 + |m|^{1/2} )/h}.
\end{equation}
\end{proposition}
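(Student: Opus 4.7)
I plan to prove Proposition \ref{p:1DResolve} by a Carleman-weighted positive-commutator argument in the spirit of Cardoso--Vodev \cite{cavo02} and Datchev--Shapiro \cite{dash20}, adapted to accommodate the singular centrifugal term $mr^{-2}$. Let $u = (-h^2 \partial_r^2 + V + mr^{-2} - E - i\epsilon)^{-1} f$ on $(0,\infty)$. For $m \geq -h^2/4$, the Friedrichs extension of $-h^2\partial_r^2 + mr^{-2}$ is self-adjoint on an explicit domain whose elements obey the (generalized) boundary condition $u(0^+) = 0$, and Hardy's inequality implies $-h^2 \partial_r^2 + mr^{-2} \geq 0$ as a quadratic form. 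In particular, $h^2 \int_0^\infty |u|^2/r^2\, dr$ is controlled by the kinetic energy, which will be essential near $r = 0$.

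The core of the proof is the standard multiplier (``spherical energy'') method: pair the equation $(P_m - E - i\epsilon)u = f$ with a test function of the form $F(r)\overline{u'} + (F'(r)/2)\overline{u}$ for a smooth real multiplier $F$, and take real parts. Integration by parts yields an identity of the form
\begin{equation*}
\int_0^\infty F'\,\bigl[h^2|u'|^2 + (V + mr^{-2} - E)|u|^2\bigr] \,dr - \int_0^\infty F\,(V + mr^{-2})'\,|u|^2\,dr = \mathcal{R}(f,\epsilon),
\end{equation*}
where $\mathcal{R}(f,\epsilon)$ collects source and absorption terms. The non-differentiable potential $V$ is handled by taking $F$ constant on $\supp V = [0, R_0]$, as in \cite{dash20}: the $FV'$ contribution then localizes to the boundary point $R_0$ and is controlled by trace estimates. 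The centrifugal contribution $-\int F(-2mr^{-3})|u|^2\,dr$ is absorbed by integrating by parts once more and invoking the Hardy bound, at the price of factors proportional to $\sqrt{|m|}$.

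To obtain the exponential factor $e^{C(1+\sqrt{|m|})/h}$, I conjugate by an exponential Carleman weight: write $v = e^{\varphi/h}u$, so that $P_\varphi v = e^{\varphi/h} f$ with $P_\varphi = -h^2\partial_r^2 + 2h\varphi'\partial_r + h\varphi'' - (\varphi')^2 + V + mr^{-2} - E - i\epsilon$, and apply the multiplier argument to $P_\varphi$. I choose $\varphi$ to be nonnegative, increasing, constant on $[0, R_0]$ (to preserve the $V$-trick), and with total variation $\sup\varphi - \inf\varphi = O(1 + \sqrt{|m|})$. The $\sqrt{|m|}$ growth reflects the natural length scale $\sqrt{m/E}$ at which $mr^{-2} = E$: an Agmon-type profile $\varphi' \sim (mr^{-2}-E)_+^{1/2}$, suitably truncated near the origin and regularized, has total variation comparable to $\sqrt{|m|}$ and produces a positive contribution from $-(\varphi')^2 + V + mr^{-2} - E$ inside the forbidden region. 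Reverting from $v$ back to $u$ costs a factor of $e^{(\sup\varphi - \inf\varphi)/h}$, yielding the claimed bound.

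The main obstacle is a \emph{uniform} treatment across all $m \geq -h^2/4$. Three regimes must be handled simultaneously: the Hardy range $-h^2/4 \leq m \leq 0$, where $mr^{-2}$ is a negative singular perturbation dominated by the kinetic term; the moderate range $m = O(1)$, where the argument reduces essentially to the $e^{C/h}$ bound of \cite{dash20}; and the tunneling range $m \gg 1$, where the Agmon-type contribution through the centrifugal barrier genuinely produces the $\sqrt{|m|}/h$ factor. Designing the multiplier $F$ and weight $\varphi$ so that the positive term on the left of the commutator identity dominates uniformly in $m$, while simultaneously controlling the singular boundary behavior at $r = 0$ using the Friedrichs/Hardy structure, is the technical heart of the proof.
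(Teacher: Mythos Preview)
Your multiplier/Carleman scheme is close to what the paper does \emph{away from} $r=0$: there the paper runs an energy identity with weight $w(r)=(\cdots)e^{\psi(r)/h}$, where $\psi'$ dominates $\eta^{-1}(|V|+|m|r^{-2})+\langle r\rangle^{-2s}$ and $\sup\psi=O(1+|m|^{1/2})$, producing exactly the exponential factor you anticipate. However, the paper makes this weight \emph{vanish identically} on $(0,\tfrac12\delta)$ with $\delta\sim h\langle h^{-2}m\rangle^{1/2}$, and controls $u$ on $(0,\delta)$ by a separate Mellin-transform argument (a b-calculus estimate for $r^2Q_m=(r\partial_r)^2-r\partial_r+mh^{-2}$): inverting the indicial polynomial along $\Im\sigma=t_0$ gives $\|r^{-t_0-1/2}u\|_{L^2(0,\delta)}\lesssim\langle h^{-2}m\rangle^{-1}\|r^{3/2-t_0}Q_m(\chi u)\|_{L^2}$, with the residue contribution $\Pi_{t_0}$ shown to vanish on the Friedrichs domain. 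The two pieces are then patched.

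Your plan diverges from the paper precisely at this near-zero step, and there is a concrete gap. With $F$ (and $\varphi$) a nonzero constant on $[0,R_0]$, integrating by parts on $(0,\infty)$ produces the boundary contribution
\[
F(0)\,\bigl[h^2|u'(r)|^2-(V+mr^{-2}-E)|u(r)|^2\bigr]\Big|_{r\to 0^+}.
\]
For $u\in\dom_m$ with $-h^2/4\le m<0$ the leading behavior is $u\sim c\,r^{\nu+1/2}$, $\nu=h^{-1}(m+h^2/4)^{1/2}\in[0,\tfrac12)$, and a direct computation gives this bracket $\sim h^2(\nu+\tfrac12)\,c^2\, r^{2\nu-1}\to+\infty$; at the endpoint $m=-h^2/4$ it blows up like $r^{-1}$. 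Likewise the bulk term $2m\int F r^{-3}|u|^2$ in your identity diverges near $0$ for $\nu<\tfrac12$. Hardy's inequality yields form-nonnegativity of $-h^2\partial_r^2+mr^{-2}$ but does not render either of these finite, and ``integrating by parts once more'' only shuffles the same $r^{2\nu-1}$ singularity. So either $F$ must vanish near $0$---forcing a separate estimate on $(0,\delta)$, which is exactly what the paper's Mellin step supplies---or you need a genuinely different mechanism there. This is the obstruction the paper isolates: a Carleman argument of the type you sketch already gives \eqref{vod exp} for $m\ge0$ (cf.\ \cite{vo21}), and the Mellin/b-calculus estimate near $r=0$ is introduced specifically to push the range down to $m\ge -h^2/4$, which is what is needed for $n=2$.
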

Proposition~\ref{p:1DResolve} is motivated by~\cite[Proposition 3.1]{vo21} where a similar estimate is proved for $m\geq 0$. However, in order to handle the case of $n=2$, we need to extend these resolvent estimates to $m\geq -h^2/4$. The proof of Proposition~\ref{p:1DResolve} utilizes methods inspired by the b-calculus from microlocal analysis to estimate $u$ by $(-h^2\partial_r^2 +V+mr^{-2}-E)u$ near $0$, and then employs a spherical energy method to handle the region away from zero. 

We expect Theorems \ref{exp thm} and \ref{ext thm} still hold for potentials $V$ which are radial, real and non-compactly supported, with sufficient decay toward infinity. A difficulty with treating this case is finding a suitable replacement for the WKB and Bessel function asymptotics we use in Section \ref{ext est section}. A development in this direction is that, since we posted this article as a preprint, Vodev \cite{vo22} designed a new Carleman estimate for $n \ge 3$, large angular momenta, and radial $V = O(\langle r \rangle)^{-\delta}$, $\delta > 4$; it yields improved resolvent bounds compared to \cite{vo21}, and an alternative proof of \eqref{radial exp est}. 

The organization of the paper is as follows. In Section \ref{reduction section} we give an overview of the plan to prove Theorems \ref{exp thm} and \ref{ext thm} via separation of variables. In Section \ref{ext est section} we use WKB and Bessel function asymptotics to prove Theorem \ref{ext thm}. In Section \ref{s:low} we use the Mellin transform and energy estimates to prove an exponential estimate which is optimal for low angular momenta but not for high ones. In Section \ref{s:exp thm} we use ODE analysis to remove the losses for high angular momenta and complete the proof of Theorem \ref{exp thm}. 

\medskip
\noindent{\textsc{Acknowledgements:}} We thank the anonymous referee, whose helpful comments improved the exposition. The authors gratefully acknowledge from the following sources: KD from NSF DMS-1708511, J.G. from EPSRC Early Career Fellowship EP/V001760/1, and J.S. from NSF DMS-2204322 and a University of Dayton Research Council Seed Grant. On behalf of all authors, the corresponding author states that there is no conflict of interest.\\

\noindent{\textsc{Data Availability Statement:}} Data sharing not applicable to this article as no datasets were generated or analyzed during the current study.

\section{Reduction to a family of one dimensional resovlent estimates} \label{reduction section}
In this section, we use separation of variables to reduce the proofs of Theorems \ref{exp thm} and \ref{ext thm} to a family of one dimensional resolvent estimates. We begin by recalling the conjugation
\begin{equation*}
r^{(n-1)/2} (-\Delta) r^{-(n-1)/2} = -\partial^2_r - \frac{\Delta_{\US^{n-1}}}{r^2} +  \frac{(n-1)(n-3)}{4r^2}.
\end{equation*}
 We then put
\[
m_j = h^2(\sigma_j + 4^{-1}(n-1)(n-3)),
\]
where $0 = \sigma_0 < \sigma_1 = \sigma_2 \le \sigma_3 \le \cdots$ are the eigenvalues of the nonnegative Laplace-Beltrami operator on the unit sphere $\mathbb S^{n-1} \subseteq \R^n$, repeated according to multiplicity. (Recall that the eigenvalues of the unit sphere are $k^2 + (n-2)k$, $k = 0, \, 1, \, \dots$.) Denote by $\mathbf{Y}_0, \ \mathbf{Y}_1, \dots$ a corresponding sequence of orthonormal real eigenfunctions. Also define
\begin{equation*}
P_m \defeq -h^2 \partial^2_r + V(r) + mr^{-2} , \qquad m \ge -\frac{h^2}{4}.
\end{equation*}
The operator $P_m$, acting on $L^2(\R_+)$, $\R_+ \defeq (0,\infty)$, with domain $ C^\infty_0(\R_+)$, is symmetric. As $P_m$ commutes with complex conjugation, by von Neumann's theorem it has equal defect indices, and thus has self adjoint extensions. Elements in the domain $\dom(P^*_m)$ are precisely those functions $u \in L^2(\R_+)$ having $u$ and $\partial_r u$ absolutely continuous along with $P_m u \in L^2(\R_+)$. Moreover, since $\|r^{-1} \varphi \|_{L^2(\R_+)} \le 2 \| \partial_r \varphi \|_{L^2(\R_+)}$ for any $\varphi \in C^\infty_0(\R_+)$, $P_m$ is a semibounded operator, thus a particular self-adjoint extension of $(P_m, C^\infty_0(\R_+))$ is its Friedrichs extension (see \cite[Theorem X.23]{reedsimon2}), which we denote by $(P_m, \dom_m)$; this is the self-adjoint extension of $(P_m, C^\infty_0(\R_+))$ that we work with throughout the paper.

We start by studying the resolvent kernel for $P_m$. To do this, we construct two convenient linearly independent solutions, $u_0$ and $u_1$, which are not in $\dom_m$, to 
\begin{equation} \label{u zero u one ep}
P_m u_{j} = (E+ i \varepsilon)u_{j}, \qquad j \in \{0,1\}.
\end{equation}
To define these solutions, let
\begin{equation*}
\begin{gathered}
 \varphi_J(r) = r^{1/2} J_\nu(\lambda r), \qquad   \varphi_Y(r) = r^{1/2} Y_\nu(\lambda r), \\
m \ge -\frac{h^2}{4}, \quad \nu = h^{-1}\big(m+\frac{h^2}{4} \big)^{1/2} \ge 0, \quad \lambda = \frac{\sqrt{E+i\varepsilon}}{h}, \quad E \in [E_{\min}, E_{\max}], \quad \ep \ge 0,
 \end{gathered}
\end{equation*}
where $J_\nu(\cdot)$ and $Y_\nu(\cdot)$ are the Bessel functions of the first and second kind, respectively \cite[Section 10.2]{dlmf}. Note that 
$$
(-h^2\partial_r+mr^{-2}-E-i\ep)\varphi_{J/Y}=0.
$$
Next, define $\varphi_0 = \varphi_J$ and, inductively,
\[
 \varphi_{n+1}(r) =  \frac{\pi}{2h^2} \int_0^r (\varphi_Y(r)\varphi_J(r') - \varphi_J(r)\varphi_Y(r')) V(r')\varphi_n(r')dr',\qquad n\geq 0.
\]
Finally, put 
\begin{equation} \label{defn u0}
 u_0(r) := \sum_{n=0}^\infty \varphi_n(r).
\end{equation}
In Appendix \ref{series construction u 0}, we prove

\begin{lemma}
\label{l:u0}
The series~\eqref{defn u0} and its first derivative converge uniformly for $(r, E, \ep)$  in compact subsets of $\R_+ \times [E_{\min}, E_{\max}] \times [0, \infty)$. In addition, $u_0(r) > 0$ near $r = 0$,
\begin{gather}
 u_0(r) = \varphi_0(r) +\frac{\pi}{2h^2} \int_0^r (\varphi_Y(r)\varphi_J(r') - \varphi_J(r)\varphi_Y(r'))V(r')u_0(r')dr', \label{u0 integral eqn} \\
( -h^2 \partial_r^2 + V(r)  + m r^{-2}  - E - i \ep)u_0 = 0, \label{u0 solves eqn}\\
 \label{uAsymptotic}
 \lim_{r\to 0^+}\big(\tfrac{n-1}{2r} u_0(r) - u_0'(r)\big)r^{(n-1)/2}= 0,
\end{gather}
and for all $ r^* >0, \nu \geq 0$, there is $C_{\nu,r^*}$ such that 
\begin{equation} \label{u zero near zero}
|u_0(r)|\leq C_{\nu,r^*}r^{\nu+\frac{1}{2}},\qquad r\in (0,r^*],
\end{equation}
\end{lemma}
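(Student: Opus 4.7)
My plan is to verify convergence of the Volterra-type series \eqref{defn u0} via standard Bessel function bounds, then read off the claimed properties of $u_0$ by term-by-term analysis. First I would record the asymptotics of $J_\nu$ and $Y_\nu$: as $z \to 0^+$, $J_\nu(z) = (z/2)^\nu/\Gamma(\nu+1) + O(z^{\nu+2})$ and $|Y_\nu(z)| \lesssim z^{-\nu}$ for $\nu > 0$, while $|Y_0(z)| \lesssim 1 + |\log z|$; on any interval $[z_0, z_1]$ with $z_0 > 0$, both Bessel functions and their derivatives are bounded. Translating these to $\varphi_J, \varphi_Y$ and using the Wronskian identity $W[\varphi_J, \varphi_Y] = 2/\pi$ (obtained from $W[J_\nu, Y_\nu](z) = 2/(\pi z)$ by a direct computation), the kernel
\[
K(r, r') := \tfrac{\pi}{2h^2}\bigl(\varphi_Y(r)\varphi_J(r') - \varphi_J(r)\varphi_Y(r')\bigr)
\]
is the one-sided (Volterra) Green's function for $-h^2\partial_r^2 + m r^{-2} - E - i\varepsilon$ on $\R_+$, satisfying $K(r,r) = 0$ and the jump identity $\partial_r K(r,r')|_{r'=r^-} = h^{-2}$.

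With these bounds in hand, I would estimate $|\varphi_n(r)|$ inductively on compact rectangles $(0, r^*] \times [E_{\min}, E_{\max}] \times [0, \varepsilon^*]$. Using $|\varphi_J(r')| \le C r'^{\nu + 1/2}$ and $|\varphi_Y(r')| \le C r'^{-\nu + 1/2}$ (with a $|\log r'|$ factor in the degenerate case $\nu = 0$) together with the ansatz $|\varphi_n(r)| \le C_n r^{\nu + 1/2}$, the two integrals $\int_0^r r'^{2\nu+1}\, dr'$ and $\int_0^r r'\, dr'$ appearing in the recursion yield a relation $C_{n+1} \le A\, C_n/(n+1)$ for an explicit constant $A = A(\nu, r^*, h, \|V\|_{L^\infty})$. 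Summing then gives $|\varphi_n(r)| \le C r^{\nu+1/2} A^n/n!$, proving \eqref{u zero near zero} and absolute, uniform convergence of the series on compact subsets of $\R_+ \times [E_{\min}, E_{\max}] \times [0, \infty)$. Termwise differentiation is justified by the same computation after replacing one factor in $K$ by its $r$-derivative and using the bounds $|\varphi_J'(r)| \lesssim r^{\nu - 1/2}$, $|\varphi_Y'(r)| \lesssim r^{-\nu - 1/2}$ (again with a log modification when $\nu = 0$).

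Once uniform convergence is in hand, \eqref{u0 integral eqn} is immediate by telescoping the recursion. The ODE \eqref{u0 solves eqn} is then obtained by applying $-h^2\partial_r^2 + V + m r^{-2} - E - i\varepsilon$ to \eqref{u0 integral eqn}: the homogeneous operator $-h^2\partial_r^2 + mr^{-2} - E - i\varepsilon$ annihilates $\varphi_0 = \varphi_J$, and acts on the Volterra integral to produce $-V(r) u_0(r)$ via the Green's-function identity, which cancels the $V u_0$ contribution from the $V$ part of the operator. Positivity near $r = 0$ follows since the leading term $\varphi_J(r) = (\lambda/2)^\nu \Gamma(\nu+1)^{-1} r^{\nu+1/2} + O(r^{\nu+5/2})$ dominates the $n \ge 1$ tail of the series, which is $O(r^{\nu+5/2})$ by the inductive bound. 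For \eqref{uAsymptotic}, I would insert the refined expansion $u_0(r) = c_\nu r^{\nu+1/2}(1 + O(r^2(1 + |\log r|)))$ together with its derivative into $\tfrac{n-1}{2r} u_0 - u_0'$: the leading $r^{\nu - 1/2}$ contributions combine with coefficient $c_\nu[(n-1)/2 - (\nu + 1/2)]$, so after multiplying by $r^{(n-1)/2}$ one gets a power $r^{\nu + (n-2)/2}$ which tends to $0$ for $n \ge 2, \nu \ge 0$, except in the borderline case $n = 2, \nu = 0$, where the leading coefficient vanishes identically and the subleading $O(r^{3/2})$ term supplies the required decay.

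The main obstacle I anticipate is the degenerate case $\nu = 0$ (arising in $n = 2$ when $m = -h^2/4$), where $Y_0$ carries a logarithmic singularity at $0$. The Volterra iteration still closes because $\int_0^r r'(1 + |\log r'|)\, dr' < \infty$, but one must carefully track log factors through the induction to verify that $C_{\nu, r^*}$ remains finite as $\nu \to 0^+$ and that the coefficient cancellation needed for \eqref{uAsymptotic} survives in this borderline regime.
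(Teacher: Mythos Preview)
Your overall strategy matches the paper's proof: bound $\varphi_J,\varphi_Y$ near the origin, run a Volterra iteration to control the $\varphi_n$, differentiate termwise, telescope to get \eqref{u0 integral eqn}, and read off \eqref{u0 solves eqn}, positivity, and \eqref{uAsymptotic} from the leading asymptotics of $\varphi_0$.

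There is, however, a concrete gap in your induction. With the ansatz $|\varphi_n(r)|\le C_n\,r^{\nu+1/2}$, the recursion and the bounds on $\varphi_J,\varphi_Y$ give
\[
|\varphi_{n+1}(r)|\ \lesssim\ C_n\Big(r^{-\nu+1/2}\!\int_0^r (r')^{2\nu+1}dr' + r^{\nu+1/2}\!\int_0^r r'\,dr'\Big)\ \lesssim\ C_n\,r^{\nu+5/2},
\]
and on $(0,r^*]$ this yields only $C_{n+1}\le A\,C_n$ with $A$ proportional to $(r^*)^2$. The integrals you wrote contain no $n$, so the claimed factor $1/(n+1)$ cannot appear and the series bound $\sum C_n$ need not converge. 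The remedy is to keep the improving power of $r$ in the inductive hypothesis: take
\[
|\varphi_n(r)|\ \le\ D_n\,r^{\,\nu+\frac12+n(2-\delta)},
\]
with a small $\delta>0$ to absorb the logarithm from $Y_0$ when $\nu=0$ (exactly the difficulty you flagged). Then the two integrals become $\int_0^r (r')^{2\nu+n(2-\delta)+1}dr'$ and $\int_0^r (r')^{n(2-\delta)+1-\delta}dr'$, whose denominators grow linearly in $n$, giving $D_{n+1}\lesssim D_n/(n+1)$ and hence $D_n\lesssim A^n/n!$. This is precisely the paper's argument (see \eqref{varphiEst}--\eqref{C n}); once you make this correction, the derivative estimate, the bound \eqref{u zero near zero}, and your treatment of \eqref{uAsymptotic} (including the borderline $n=2$, $\nu=0$ case via the vanishing leading coefficient) all go through as you describe.
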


Next, put
\begin{equation} \label{defn u1}
 u_1(r) = \varphi_J(r) + i \varphi_Y(r), \qquad r >R_0,
\end{equation}
and extend $u_1$ by requiring that it solve \eqref{u zero u one ep}. By \eqref{defn u1} and \cite[Theorem 2.1 in Section 5.2.1]{olver},  $u_1$ depends continuously on $(r, E, \ep)$ varying in $\R_+ \times [E_{\text{min}}, E_{\text{max}}] \times [0, \infty)$.
Also, from  \cite[10.17.5]{dlmf}, for all $r^* \geq 1$, $\nu\geq 0$, there is $C_{r^*,\nu}>0$ such that 
\begin{equation} \label{u one near infty}
|u_1(r)| \leq C_{r^*,\nu} e^{- \imag \lambda r}, \qquad r \geq r^*.
\end{equation}

\begin{lemma}
The functions $u_0$ in~\eqref{defn u0} and $u_1$ in~\eqref{defn u1} are linearly independent for all $\ep \geq 0$.
\end{lemma}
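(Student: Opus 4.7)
My plan is to show that the Wronskian $W(u_0, u_1) \defeq u_0 u_1' - u_0' u_1$ is nonzero. Since both $u_0$ and $u_1$ satisfy the second-order ODE \eqref{u zero u one ep} with no first-order term, $W(u_0, u_1)$ is constant on $\R_+$, so linear dependence is equivalent to $u_1 = cu_0$ for some $c \in \C \setminus \{0\}$. I will derive a contradiction from this equality in each of the cases $\ep = 0$ and $\ep > 0$.

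When $\ep = 0$, $\lambda = \sqrt{E}/h$ is real, so $\varphi_J, \varphi_Y$ are real-valued; inspecting \eqref{defn u0} shows each iterate $\varphi_n$ is real, hence $u_0$ is real. For $r > R_0$ we may write $u_0 = a\varphi_J + b\varphi_Y$ with $a, b \in \R$, and the relation $u_1 = cu_0$ together with the linear independence of $\varphi_J, \varphi_Y$ forces $ca = 1$ and $cb = i$. With $a, b \in \R$ these would require $c$ to be simultaneously real and purely imaginary, impossible unless $a = b = 0$; but then $u_0 \equiv 0$ on $(R_0, \infty)$ and, by ODE uniqueness, on all of $\R_+$, contradicting $u_0 > 0$ near $0$.

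When $\ep > 0$ we have $\imag \lambda > 0$, so \eqref{u one near infty} places $u_1$, and hence $u_0 = c^{-1} u_1$, in $L^2([r^*, \infty))$ for any $r^* \geq 1$; combined with \eqref{u zero near zero} this yields $u_0 \in L^2(\R_+)$. Pairing $(P_m - E - i\ep)u_0 = 0$ with $\overline{u_0}$ on $[\delta, R]$ and taking the imaginary part of the resulting Green's identity gives
\begin{equation*}
\ep \int_\delta^R |u_0|^2\,dr = h^2 \bigl( \imag(u_0'(\delta)\overline{u_0(\delta)}) - \imag(u_0'(R)\overline{u_0(R)}) \bigr).
\end{equation*}
The $R \to \infty$ term vanishes by the exponential decay that $u_0$ inherits from $u_1$, and the $\delta \to 0^+$ term vanishes using the small-$r$ expansion $u_0 = \varphi_J + O(r^{\nu + 5/2})$ from \eqref{u0 integral eqn} together with the Taylor series for $J_\nu(\lambda r)$. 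This forces $\ep \|u_0\|_{L^2}^2 = 0$, the desired contradiction. The one delicate check is that $\imag(u_0' \overline{u_0}) \to 0$ at $r = 0$ in the limiting case $\nu = 0$ (relevant in dimension two with $m = -h^2/4$), where $u_0' \overline{u_0}$ itself stays bounded away from zero and only the \emph{imaginary} part decays (as $O(r^2)$, by tracking the two leading terms of $J_0$).
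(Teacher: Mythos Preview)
Your proof is correct and follows essentially the same route as the paper. For $\ep = 0$ the arguments are identical: $u_0$ is real, so on $(R_0,\infty)$ it is a real combination of $\varphi_J$ and $\varphi_Y$, which cannot be proportional to $\varphi_J + i\varphi_Y$. For $\ep > 0$ both arguments reduce to showing that a would-be $L^2$ solution of $(P_m - E - i\ep)u = 0$ must vanish; the paper invokes self-adjointness of the Friedrichs extension in one sentence, while you carry out the underlying Green's identity by hand and check the boundary terms directly. Your boundary analysis at $r=0$ (including the $\nu = 0$ case) is exactly what verifies membership in $\dom_m$ via the characterization \eqref{nz characterize}, so the two arguments are really the same computation at different levels of abstraction. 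One minor remark: the remainder in $u_0 - \varphi_J$ is $O(r^{\nu + 5/2 - \delta})$ for any $\delta > 0$ (due to the logarithmic singularity of $Y_0$ when $\nu = 0$), not quite $O(r^{\nu + 5/2})$, but this does not affect your conclusion.
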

\begin{proof}
First consider $\ep>0$ and suppose $u_1$ and $u_0$ are linearly dependent. Then, since \\ $u_0\in L^2((0,1])$ by~\eqref{u zero near zero} and $u_1\in L^2([1,\infty))$ by~\eqref{u one near infty}, we would have $u_1\in L^2([0,\infty))$. In particular, $u_1$ would be an $L^2$-solution of $(P_m - E - i \ep)u =0$ and thus must vanish identically. Next, when $\ep = 0$, then by the Bessel function equation \eqref{e:wdifeq},  $u_0(r) = A \varphi_J(\lambda r) + B\varphi_Y(\lambda r)$ when $r > R_0$ and for some real constants $A$ and $B$. Comparing with \eqref{defn u1} and using the linear independence of $J_\nu(z)$ and $Y_\nu(z)$, $z > 0$, we conclude that $u_0$ and  $u_1$ are linearly independent when $\ep = 0$, too.

\end{proof}

We can now define the resolvent kernel for $P_m$,
\begin{equation} \label{resolv kernel ep}
\begin{gathered}
K(r,r') = K(r,r'; m, E, \varepsilon, h) = -\frac{u_0(r) u_1(r')}{h^2 W}, \\
 r \le r', \, m \ge -\frac{h^2}{4}, \, E \in [E_{\text{min}}, E_{\text{max}}], \, \ep \ge 0, 
\end{gathered}
\end{equation}
and for $r'< r$, $K(r,r') = K(r',r)$, where
$W = u_0u'_1 - u'_0u_1$ is the Wronskian of $u_0$ and $u_1$.

\begin{lemma}
For $E, \varepsilon>0$, $u \in L^2(\R^n)$ and $v \in L^2(\R_+)$,
\begin{gather}
\begin{split}
(P_{m} - E - i\ep)^{-1} v &= \int^\infty_0 K(r,r'; m, E, \varepsilon, h) v(r')dr'.  \label{relate oned resolv to kernel}\\
&=  -h^{-2} W^{-1} \big( u_1(r) \int^r_0  u_0(r') v(r') dr' +  u_0(r) \int_r^\infty  u_1(r') v(r') dr'  \big),
\end{split}\\
 (P-E-i\varepsilon)^{-1} u = \sum_{j=0}^\infty \mathbf{Y}_j \int_0^\infty \int_{\mathbb S^{n-1}} r^{-(n-1)/2} K(r,r'; m_j, E, \varepsilon, h) (r')^{(n-1)/2} u(r',\theta)\mathbf{Y}_j(\theta)d\theta dr'. \label{decompose resolv}
 \end{gather}
\end{lemma}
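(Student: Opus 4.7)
The plan is to prove the one-dimensional identity \eqref{relate oned resolv to kernel} first, and then deduce \eqref{decompose resolv} from the spherical-harmonic expansion in the conjugated frame $\tilde u := r^{(n-1)/2}u$. For \eqref{relate oned resolv to kernel}, I start by noting that since $u_0$ and $u_1$ satisfy the same second-order ODE \eqref{u zero u one ep} with no first-order term, Abel's identity forces $W' = u_0 u_1'' - u_0'' u_1 = 0$, so $W$ is a nonzero constant (nonzero by linear independence). Setting $(Tv)(r) := \int_0^\infty K(r,r')v(r')\,dr'$ and splitting the integral at $r' = r$ using the piecewise definition \eqref{resolv kernel ep} recovers the explicit two-integral expression on the right of \eqref{relate oned resolv to kernel}. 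I then verify $(P_m - E - i\ep)Tv = v$ for $v \in C_0^\infty(\R_+)$ by direct differentiation: the two $u_0(r)u_1(r)v(r)$-type boundary contributions cancel after the first differentiation, a second differentiation produces a surviving $-h^{-2}W^{-1}(u_1' u_0 - u_0' u_1)v = h^{-2}v$, and substituting $h^2 u_j'' = (V + mr^{-2} - E - i\ep)u_j$ in the remaining pieces yields the desired identity. For $v$ compactly supported in $(0,\infty)$, $Tv$ reduces near $r=0$ to a constant multiple of $u_0$ and near $r = \infty$ to a constant multiple of $u_1$, so $Tv$ inherits the near-zero asymptotic \eqref{uAsymptotic} and the decay \eqref{u one near infty}; consequently $Tv \in \dom_m$, and bijectivity of $P_m - E - i\ep : \dom_m \to L^2(\R_+)$ gives $Tv = (P_m - E - i\ep)^{-1}v$. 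A density argument then extends \eqref{relate oned resolv to kernel} to arbitrary $v \in L^2(\R_+)$.

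For \eqref{decompose resolv}, the map $u \mapsto \tilde u := r^{(n-1)/2}u$ is an isometry $L^2(\R^n) \to L^2(\R_+ \times \mathbb{S}^{n-1},\,dr\,d\theta)$, and the conjugation identity recalled at the top of this section shows $r^{(n-1)/2} P r^{-(n-1)/2} = -h^2 \partial_r^2 + V - h^2 r^{-2}\Delta_{\mathbb{S}^{n-1}} + h^2(n-1)(n-3)/(4r^2)$. Expanding $\tilde u(r,\theta) = \sum_j \tilde u_j(r)\mathbf{Y}_j(\theta)$ with $\tilde u_j(r) = \int_{\mathbb{S}^{n-1}} \tilde u(r,\theta)\mathbf{Y}_j(\theta)\,d\theta$ and using $-\Delta_{\mathbb{S}^{n-1}}\mathbf{Y}_j = \sigma_j \mathbf{Y}_j$ diagonalises the conjugated operator into the family $\{P_{m_j}\}$ with $m_j = h^2(\sigma_j + (n-1)(n-3)/4)$. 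Applying \eqref{relate oned resolv to kernel} to each $\tilde u_j$ and multiplying through by the conjugating factor $r^{-(n-1)/2}$ produces \eqref{decompose resolv}; $L^2(\R^n)$-convergence of the series is automatic from Plancherel on $\mathbb{S}^{n-1}$ together with the boundedness of $(P - E - i\ep)^{-1}$ on $L^2(\R^n)$.

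The main anticipated obstacle is the domain-membership step: showing that the explicit integral representation $Tv$ lies in the Friedrichs domain $\dom_m$ rather than merely in $\dom(P_m^*)$. The cleanest route uses compactly supported test functions $v$, for which $Tv$ reduces near the endpoints to explicit multiples of $u_0$ and $u_1$ whose boundary behavior is controlled by \eqref{uAsymptotic} and \eqref{u one near infty}; the general $L^2$ case then follows by density and the $L^2$-boundedness of the resolvent.
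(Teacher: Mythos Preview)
Your treatment of \eqref{relate oned resolv to kernel} matches the paper's: verify $(P_m - E - i\ep)Tv = v$ by direct computation for $v \in C_0^\infty(\R_+)$, check $Tv \in \dom_m$, and extend by density. One small correction: the condition \eqref{uAsymptotic} you cite for the near-zero behaviour is the one tailored to the $\R^n$ problem; for membership in $\dom_m$ you want the bound \eqref{u zero near zero}, i.e.\ $|u_0(r)| \lesssim r^{\nu+1/2}$, together with the explicit Friedrichs-domain characterisation \eqref{nz characterize}.

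For \eqref{decompose resolv} your route diverges from the paper's, and there is a gap. You argue that conjugation plus spherical expansion diagonalises the self-adjoint operator $(P, H^2(\R^n))$ into $\bigoplus_j (P_{m_j}, \dom_{m_j})$, and then invoke \eqref{relate oned resolv to kernel} componentwise. The diagonalisation is clear at the level of differential expressions, but to pass to resolvents you need the $j$-th component of $r^{(n-1)/2} H^2(\R^n)$ to coincide with the \emph{Friedrichs} domain $\dom_{m_j}$. For $m_j \ge 3h^2/4$ the operator is limit-point at $0$ and there is nothing to check, but for $n=2,3$ and $j=0$ one has $m_0 < 3h^2/4$ and a one-parameter family of self-adjoint realisations; identifying the one induced by $H^2(\R^n)$ with the Friedrichs extension is a separate argument you have not supplied. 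The paper avoids this by working in the other direction: for a single mode $u = \mathbf{Y}_j v$ with $v \in C_0^\infty(\R_+)$, it verifies directly (integration by parts on $\{|x| > \delta\}$, then $\delta \to 0$, using \eqref{uAsymptotic} and \eqref{u zero near zero} to kill the boundary terms) that the candidate expression lies in $H^2(\R^n)$ and is mapped by $P - E - i\ep$ to $u$. This bypasses the need to match the one-dimensional Friedrichs domains with the $n$-dimensional one.
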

\begin{proof}
To prove \eqref{relate oned resolv to kernel}, it suffices to work with $v \in C^\infty_0(\R_+)$. We check that the right side of \eqref{relate oned resolv to kernel} belongs to $\dom_m$ and, that applying $P_m - E -i\ep$ yields $v$. The latter is a direct computation, while the former follows from \eqref{u zero near zero}, \eqref{u one near infty}, and the fact that a characterization of $\dom_m$  is
\begin{equation} \label{nz characterize}
\dom_m = \begin{cases}  \big\{ f \in L^2(\R_+) : P_m f \in L^2(\R_+), \,  r^{-\nu - \tfrac{1}{2}} f \in L^\infty \text{ near $r = 0$} \big\} & \tfrac{-h^2}{4} \le m <  \tfrac{3h^2}{4}, \\
 \big\{ f \in L^2(\R_+) : P_m f \in L^2(\R_+) \big\} & m \ge \tfrac{3h^2}{4}.
  \end{cases}
\end{equation}
 See \cite[Section 6, Example I]{nize92}.
 
We need only verify \eqref{decompose resolv} for $u = \mathbf{Y}_j v$ with $v \in C_0^\infty(\R_+)$, as such functions have dense linear span in $L^2(\R^n)$. In this case, the right side of \eqref{decompose resolv} reduces to
\begin{equation} \label{decompose resolv single freq}
 \mathbf{Y}_j \int_0^\infty r^{-(n-1)/2} K(r,r'; m_j, E, \varepsilon, h) (r')^{(n-1)/2} v(r')dr'.
\end{equation}
To show \eqref{decompose resolv single freq} and $ (P-E-i\varepsilon)^{-1} \mathbf{Y}_jv$ coincide, we check that applying  $P-E-i\varepsilon$, respectively $\Delta$, to \eqref{decompose resolv single freq} in the sense of distributions, results in $\mathbf{Y}_jv$, respectively some function in $L^2(\R^n)$. Both computations are handled by integrating by parts in polar coordinates, on domains of the form $\{ x \in \R^n : |x| > \delta>0\}$, and sending $\delta \to 0$.  All boundary terms that appear in the calculation vanish as $\delta \to 0$, thanks to~\eqref{uAsymptotic} and~\eqref{u zero near zero}. We leave the remaining details to the reader.  \\
\end{proof}

We next consider the limit as $\ep \to 0^+$. Recall that \cite[Theorem 4.2]{ag75} for any $s > 1/2$, and $E \in [E_{\min}, E_{\max}]$ , the limit
\begin{equation} \label{limit absorpt}
 \langle x \rangle^{-s}(P - E - i0)^{-1} \langle x \rangle^{-s} \defeq  \lim_{\ep \to 0^+} \langle x \rangle^{-s} (P - E - i\ep)^{-1} \langle x \rangle^{-s}
\end{equation}
 exists in the uniform topology $L^2(\R^n) \to L^2(\R^n)$. By \eqref{decompose resolv}, \eqref{relate oned resolv to kernel}, and because $K(r, r'; m_j, E, \ep, h)$ depends continuously on $(r,r', E, \ep)$ varying in compact subsets of $\R_+ \times \R_+ \times [E_{\text{min}}, E_{\text{max}}] \times [0, \infty)$, for each $j$ and $u, v \in C^\infty_0(\R_+)$, 
 \begin{equation*}
 \begin{split} 
  \langle \mathbf{Y}_j u, \langle &x \rangle^{-s}(P - E - i0)^{-1} \langle x \rangle^{-s} \mathbf{Y}_j v \rangle_{L^2} \\
 &= \lim_{\ep \to 0^+}  \langle r^{(n-1)/2} u,  \langle r \rangle^{-s} \int_0^\infty K(r,r'; m_j, E, \varepsilon, h)  \langle r' \rangle^{-s}  (r')^{(n-1)/2} v(r') dr' \rangle_{L^2(\R_+)} \\
 &=  \lim_{\ep \to 0^+}  \langle r^{(n-1)/2} u,  \langle r \rangle^{-s}(P_{m_j} - E - i\ep)^{-1}  \langle r' \rangle^{-s}  (r')^{(n-1)/2} v(r') \rangle_{L^2(\R_+)} \\
  &=  \langle r^{(n-1)/2} u,  \langle r \rangle^{-s} \int_0^\infty K(r,r'; m_j, E, 0, h)  \langle r' \rangle^{-s}  (r')^{(n-1)/2} v(r') dr' \rangle_{L^2(\R_+)}. 
 \end{split}
\end{equation*}
Thus, as functions $\{\mathbf{Y}_j v : j \in \NN_0, \, v \in C^\infty_0(\R_+) \}$ have dense linear span in $L^2(\R^n)$, to bound the norm of \eqref{limit absorpt} on $L^2(\R^n)$, it suffices to bound, uniformly in $j$, the kernel $K(r,r'; m_j, E, 0, h)$, or alternatively, $\|\langle r \rangle^{-s}(P_{m_j} - E - i\ep)^{-1}  \langle r \rangle^{-s}\|_{L^2(\R_+) \to L^2(\R_+)}$ for $\ep > 0$ small.

If the bounds \eqref{radial exp est} and \eqref{radial ext est} are established for $\ep = 0$, it is well-known that one can use resolvent identities to show they hold for $\ep > 0$ as well. We omit the proof of this but refer the reader to the relevant results, see \cite[Proposition 3]{brpe00} and \cite[Theorem 1.5]{vo14}.

\section{Exterior estimates} \label{ext est section}

Theorem \ref{ext thm} follows from the stronger statement that, for any  $R > \sup_{E\in [E_{\min},E_{\max}]} R_1(V, E)$ (see \eqref{e:r1}), we have 
\begin{equation} \label{bound cutoff kernel}
|K(r, r')| = |K(r,r',m,E,0,h)| \le \frac{C}{h},
\end{equation}
uniformly for $r$ and $r'$ in $[R,\infty)$ obeying $r \le r'$, $m\geq -h^2/4$, $h$ small, and $E\in [E_{\max},E_{\min}]$. We prove \eqref{bound cutoff kernel} in this section, over the course of two lemmas. The first lemma establishes \eqref{bound cutoff kernel} for $m \le M_+$, where 
\begin{equation} \label{defn M plus}
M_+ = M_+(E) \defeq M_0 + \frac{E(R^2 - R^2_1)}{2}.
\end{equation}
 In the second lemma, we prove \eqref{bound cutoff kernel} for $m > M_+$.

\begin{lemma} \label{Olver lemma}
There are $C$ and $h_0$ such that \eqref{bound cutoff kernel} holds for all $r$ and $r'$ in $[R,\infty)$ obeying $r \le r'$, $h \in(0,h_0]$, $E\in[E_{\min},E_{\max}]$ and for all $m \in [-h^2/4,M_+]$.
\end{lemma}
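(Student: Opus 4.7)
The plan is to use the fact that $V$ vanishes on $[R,\infty) \subseteq (R_0,\infty)$ (since $R > R_1 \ge R_0$), so that the solutions $u_0$ and $u_1$ reduce to explicit Bessel combinations there, and then estimate the kernel using uniform asymptotics for $J_\nu$ and $Y_\nu$ in the classically allowed region $\lambda r > \nu$. The quantitative choice $R > \sqrt{M_+/E}$ is what keeps us a definite distance, in ratio, from the turning point $\lambda r = \nu$, uniformly in $h$ and in $m \in [-h^2/4, M_+]$.

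On $[R,\infty)$ the real solution $u_0$ must equal $A\varphi_J + B\varphi_Y$ for some real constants $A = A(m,E,h)$, $B = B(m,E,h)$, while $u_1 = \varphi_J + i\varphi_Y$ by definition. The classical Wronskian identity $J_\nu(z) Y_\nu'(z) - J_\nu'(z)Y_\nu(z) = 2/(\pi z)$ gives $\varphi_J \varphi_Y' - \varphi_J' \varphi_Y \equiv 2/\pi$, and a short computation yields
\[
W = u_0 u_1' - u_0' u_1 = (iA - B)\cdot \frac{2}{\pi}, \qquad |W| = \frac{2}{\pi}\sqrt{A^2+B^2}.
\]
Cauchy--Schwarz bounds $|u_0(r)| \le \sqrt{A^2+B^2}\,\sqrt{\varphi_J^2(r) + \varphi_Y^2(r)}$, while $|u_1(r')|^2 = \varphi_J^2(r') + \varphi_Y^2(r')$. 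Substituting into $K = -u_0(r)u_1(r')/(h^2 W)$, the factor $\sqrt{A^2+B^2}$ cancels, leaving
\[
|K(r,r')| \le \frac{\pi}{2h^2}\sqrt{\varphi_J^2(r) + \varphi_Y^2(r)}\,\sqrt{\varphi_J^2(r') + \varphi_Y^2(r')}.
\]
Thus it suffices to prove $\varphi_J^2(r) + \varphi_Y^2(r) = r[J_\nu^2(\lambda r) + Y_\nu^2(\lambda r)] \le Ch$ uniformly for $r \ge R$, $m \in [-h^2/4, M_+]$, $E \in [E_{\min},E_{\max}]$, and $h$ small.

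The key analytic input is the uniform estimate
\[
J_\nu^2(z) + Y_\nu^2(z) \le \frac{C_\delta}{\sqrt{z^2 - \nu^2}}, \qquad \nu \ge 0,\ z/\nu \ge 1 + \delta,
\]
which follows from Olver's uniform asymptotic expansions \cite[Ch.~10]{dlmf} for $J_\nu, Y_\nu$ in the oscillatory region; it captures the leading behaviour $J_\nu^2(z) + Y_\nu^2(z) \sim 2/(\pi\sqrt{z^2-\nu^2})$. Taking $z = \lambda r$ and $\nu = h^{-1}\sqrt{m + h^2/4}$, one has $z^2 - \nu^2 = (Er^2 - m - h^2/4)/h^2$. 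The choice $M_+ = M_0 + E(R^2-R_1^2)/2$ makes $ER^2 - M_+ = E(R^2-R_1^2)/2 > 0$, and the identity $E - (ER^2-M_+)/R^2 = M_+/R^2$ gives the elementary algebraic inequality $Er^2 - M_+ \ge (ER^2-M_+)(r/R)^2$ for $r \ge R$. Consequently $z^2 - \nu^2 \ge c r^2/h^2$ for $h$ small (uniformly in $m, E$), whence $\sqrt{\varphi_J^2(r) + \varphi_Y^2(r)} \le C' h^{1/2}$ and the desired bound $|K(r,r')| \le C/h$ follows.

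The main obstacle is the uniformity of the Bessel estimate across the full range $\nu \in [0, O(h^{-1})]$: since $\nu$ can be as large as $h^{-1}\sqrt{M_+ + h^2/4}$, fixed-$\nu$ asymptotics are insufficient, and one genuinely needs Olver's uniform expansions, which remain valid only while the argument stays bounded away in ratio from the turning point $z = \nu$. The geometric condition $R > \sqrt{M_+/E}$ provides exactly this separation: $z/\nu \to \sqrt{2R^2/(R^2 + R_1^2)} > 1$ as $h \to 0$, so $z/\nu \ge 1 + \delta$ holds uniformly in $m \in [-h^2/4, M_+]$ and $r \ge R$ once $h$ is small enough.
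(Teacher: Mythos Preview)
Your proof is correct and takes a genuinely different route from the paper's. The paper applies Liouville--Green/WKB asymptotics directly to the ODE $(-h^2\partial_r^2 + mr^{-2} - E)u = 0$ on $[R,\infty)$: it writes $u_0$ and $u_1$ as combinations of $\exp(\pm ih^{-1}\int_R^r \sqrt{E-ms^{-2}}\,ds)/\sqrt[4]{E-mr^{-2}}$ with $O(h r^{-3})$ errors, computes the Wronskian, and substitutes. You instead exploit the exact Bessel representation $u_0 = A\varphi_J + B\varphi_Y$, $u_1 = \varphi_J + i\varphi_Y$ on $[R,\infty)$, and your Cauchy--Schwarz step cleanly eliminates the unknown coefficients $A,B$ without ever needing to analyze them. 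This reduces everything to the single scalar bound $r(J_\nu^2(\lambda r) + Y_\nu^2(\lambda r)) \le Ch$, which follows from the uniform oscillatory-region estimate you quote. Your approach is arguably more elementary and is closer in spirit to the paper's treatment of the complementary range $m > M_+$ (Lemma~\ref{l:rbigmbig}), which also works through the Bessel representation; the paper's WKB argument, on the other hand, would adapt more readily to situations where $V$ is not exactly zero on the exterior region. One small caveat: your cited inequality $J_\nu^2(z)+Y_\nu^2(z)\le C_\delta/\sqrt{z^2-\nu^2}$ is not literally uniform for all $\nu\ge 0$ and all $z$ with $z/\nu\ge 1+\delta$ (think of $\nu$ bounded and $z$ small), but in your application $z=\lambda r\gtrsim h^{-1}$ is always large, so combining Olver's uniform expansions for $\nu\gg 1$ with the standard large-argument asymptotics for bounded $\nu$ covers the needed range; you effectively acknowledge this in your final paragraph.
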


\begin{proof}
The proof is essentially the same as the one in~\cite[Lemma 1]{daji20}. It is based on WKB or Liouville-Green asymptotics for solutions to an ODE with real coefficients in a region with no turning point. First, observe that if $r \ge R $ and $m \le M_+$, then
\begin{equation*}
mr^{-2} \le \frac{ 2M_0 +  E(R^2 - R^2_1)}{2r^2} \le \frac{ 2ER^2_1+  E(R^2 - R^2_1)}{2R^2} = E -  \frac{E(R^2 - R^2_1)}{2R^2}.
\end{equation*}
Thus $E -mr^{-2}$ is bounded below by a positive constant uniformly in $E$,  $r$ and $m$.

By \cite[Section 6.2.4]{olver}, there are real numbers $A = A(h)$ and $B =B(h)$ such that, for $r \ge R$ we have
\[
 u_0(r) = \frac 1 {\sqrt[4]{E- mr^{-2}}} \Big(\sum_\pm (A\pm iB) \exp\Big(\pm \frac i h \int_R^r \sqrt{E- ms^{-2}}ds \Big)(1+\varepsilon_\pm(r))\Big),
\]
where $\varepsilon_+$ and $\varepsilon_-$ satisfy
\[
 |\varepsilon_\pm(r)| + h|\varepsilon_\pm'(r)| \lesssim \frac{h}{r^3}.
\]
By \cite[Section 6.2.4]{olver}, there is a constant $D = D(h)$ so that for $r \ge R$,
\[
 u_1(r) = \frac {D}{\sqrt[4]{E- mr^{-2}}}\exp\Big(\frac i h \int_R^r \sqrt{E- ms^{-2}}ds \Big)(1+\varepsilon_+(r)),
\]
where we rule out the presence of an $\exp\Big(- \frac i h \int_R^r \sqrt{E- ms^{-2}}ds \Big)$ term by using large-argument Bessel function asymptotics \cite[10.17.5 and 10.17.11]{dlmf}
to show that 
\begin{equation*}
\partial_r u_1 - i \frac{\sqrt{E}}{h} u_1 \to 0, \qquad \text{as $r \to \infty$}.
\end{equation*}

 Next we compute the Wronskian
\[
 W = \frac D {\sqrt{E- mr^{-2}}}\Big(A-iB\Big) \frac {2i}h \sqrt{E- mr^{-2}}\Big(1 + O(hr^{-3})\Big) = \frac{2D(B+iA)}h,
\]
where we dropped the remainder because $W$ is independent of $r$. Plugging these formulas for $u_0$, $u_1$, and $W$ into the formula \eqref{resolv kernel ep} for $K$ gives the conclusion.\\
\end{proof}

\begin{lemma}\label{l:rbigmbig}
 There are $C$ and $h_0$ such that \eqref{bound cutoff kernel} holds for all $r$ and $r'$ in $[R_0,\infty)$ obeying $r \le r'$, $h \in(0,h_0]$, $E\in[E_{\min},E_{\max}]$, and for all $m > M_+$.
\end{lemma}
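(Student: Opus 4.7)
The plan is to exploit the classically forbidden structure created by the centrifugal term when $m > M_+$. Since $V \equiv 0$ on $[R_0, \infty)$, both $u_0$ and $u_1$ satisfy $-h^2 u'' + (mr^{-2} - E)u = 0$ there, whose classical turning point is at $r_{\mathrm{tp}} := \sqrt{m/E}$. Using $M_+ = E(R_1^2 + R^2)/2$, the hypothesis $m > M_+$ yields $r_{\mathrm{tp}} > \sqrt{(R_1^2 + R^2)/2} > R_1 \ge R_0$, so the effective potential $V + mr^{-2} - E$ is strictly positive on all of $(0, r_{\mathrm{tp}})$: on $(0, R_0]$ because $V + M_0 r^{-2} - E \ge 0$ (by the definition of $M_0$) and $m > M_0$, and on $(R_0, r_{\mathrm{tp}})$ because $mr^{-2} - E > 0$. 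In contrast to Lemma~\ref{Olver lemma}, the Liouville--Green expansion away from turning points no longer applies uniformly, so I would rely on Olver's uniform large-order Bessel asymptotics expressed via Airy functions (as in Section~10.20 of DLMF), which remain valid across the turning point.

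First, I would establish that $u_0, u_0' > 0$ on $(0, r_{\mathrm{tp}}]$. Lemma~\ref{l:u0} gives $u_0(r) \sim c r^{\nu+1/2}$ with $c > 0$ near $r = 0$, so both $u_0$ and $u_0'$ start positive. Since the coefficient $V + mr^{-2} - E$ is strictly positive on $(0, r_{\mathrm{tp}})$, the ODE forces $u_0'' > 0$ wherever $u_0 > 0$; hence $u_0'$ is increasing, never vanishes, and $u_0$ is positive and strictly increasing throughout $(0, r_{\mathrm{tp}}]$. In particular $u_0(R_0), u_0'(R_0) > 0$.

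Second, on $[R_0, \infty)$ I would write $u_0 = A\varphi_J + B\varphi_Y$ with real constants $A, B$ determined by $u_0(R_0), u_0'(R_0)$ via Cramer's rule and the Bessel Wronskian identity $\varphi_J \varphi_Y' - \varphi_J' \varphi_Y = 2/\pi$; this is compatible with $u_1 = \varphi_J + i\varphi_Y$ there. A direct computation gives $W = (2/\pi)(iA - B)$, hence $|W| = (2/\pi)\sqrt{A^2 + B^2}$, and the kernel formula becomes
\begin{equation*}
K(r,r') = -\frac{\pi(A\varphi_J(r) + B\varphi_Y(r))(\varphi_J(r') + i\varphi_Y(r'))}{2 h^2 (iA - B)}, \qquad r \le r'.
\end{equation*}
Thus \eqref{bound cutoff kernel} reduces to verifying
\begin{equation*}
\bigl|A\varphi_J(r) + B\varphi_Y(r)\bigr|\,\bigl|\varphi_J(r') + i\varphi_Y(r')\bigr| \le C h\sqrt{A^2 + B^2}
\end{equation*}
for $R_0 \le r \le r'$. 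I would then substitute Olver's uniform asymptotics and conclude by case analysis according to whether $r, r'$ lie below, straddle, or lie above $r_{\mathrm{tp}}$.

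The main obstacle is the delicate tracking of exponentially large and small Bessel factors uniformly in $m$, and their Airy-type matching across the turning point. The key cancellation is that the positive matching from Step~1 forces $|B/A|$ to be exponentially small in $\nu = h^{-1}(m + h^2/4)^{1/2}$: the forbidden-region growing mode $\varphi_Y$ appears with a heavily suppressed coefficient, so $A\varphi_J(r) + B\varphi_Y(r)$ on $(R_0, r_{\mathrm{tp}}]$ is effectively $A\varphi_J(r)$, while $\sqrt{A^2 + B^2} \approx |A|$ in the denominator cancels the $A$ in the numerator. This reduces the estimate to a uniform Bessel kernel bound of the "free" type which Olver's Airy-type expansion delivers smoothly across $r_{\mathrm{tp}}$; for $r' > r_{\mathrm{tp}}$ the oscillatory amplitude $|u_1(r')| = O(\nu^{-1/2})$ completes the bound.
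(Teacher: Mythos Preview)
Your proposal is correct and follows essentially the same approach as the paper's proof. Both arguments decompose $u_0 = A\varphi_J + B\varphi_Y$ on $[R_0,\infty)$, compute $W = (2/\pi)(iA-B)$, use the positivity of $u_0(R_0), u_0'(R_0)$ (from the classically forbidden structure when $m \ge M_0$) to show that $|B| \lesssim e^{-2\nu\xi_0}A$ is exponentially small relative to $A$, and then invoke Olver's uniform large-$\nu$ Bessel asymptotics (the Airy-type expansions of DLMF \S10.20) to bound the resulting kernel; the only cosmetic difference is that the paper packages the turning-point crossing into a single uniform bound \eqref{e:jyzbigbound} rather than an explicit case split.
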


\begin{proof}
By the Bessel function differential equation \eqref{e:wdifeq}, there are real numbers $A$ and $B$  such that, for $r \ge R_0$,
\[
u_0(r) = r^{1/2}(A J_\nu(\nu z)+ B Y_\nu(\nu z)),
\]
where
\[
\nu = h^{-1}(m + \tfrac 14 h^2)^{1/2}, \qquad z = (m + \tfrac 14 h^2)^{-1/2} E^{1/2} r. 
\]
(Note that the constants $A$ and $B$ here are analogous to but different from the ones from Lemma \ref{Olver lemma}.) Recall from \eqref{defn u1} that  
\[
u_1(r) = r^{1/2}(J_\nu(\nu z) + i Y_\nu(\nu z)).
\]

By the Bessel function Wronskian formula \eqref{e:wwronskian},
\[
W = 2\pi^{-1}(iA-B).
\]
To bound $B$ in terms of $A$ we use the fact that $u_0(R_0), \, u'_0(R_0) \ge 0$ which follows from \\ $V + mr^{-2} - E \geq 0$ on $(0,R_0]$; see Lemma~\ref{nonnegativity}. By the Bessel function bounds \eqref{e:jyzsmall}, for $h$ small enough, we have
\[
B \le \frac {J_\nu(\nu z_0)}{-Y_\nu(\nu z_0)}A \lesssim e^{-2 \nu \xi_0} A, \qquad -B \le \frac{J_\nu(\nu z_0)+2R_0J'_\nu(\nu z_0)}{Y_\nu(\nu z_0)+2R_0Y'_\nu(\nu z_0)} A \lesssim e^{-2 \nu \xi_0}  A
\]
where
\[
 \xi_0 = \int_{z_0}^1 t^{-1} (1-t^2)^{1/2}dt, \qquad z_0 = (m + \tfrac 14 h^2)^{-1/2} E^{1/2} R_0. 
\]
Note that $z_0 <1$ because 
\begin{equation*}
z_0 \le M_+^{-1/2} E^{1/2} R_0 = \left(\frac{M_0}{ER^2_0} +\frac{R^2 - R^2_0}{2R^2_0}\right)^{-1/2} \le \left(1 +\frac{R^2 - R^2_0}{2R^2_0}\right)^{-1/2}.
\end{equation*}

Then, when  $R_0 \le r \le r'$, letting $z' = (m + \tfrac 14 h^2)^{-1/2} E^{1/2} r'$ and inserting $|B| \lesssim e^{-2\nu \xi_0} A$ and the Bessel function bound \eqref{e:jyzbigbound} into \eqref{resolv kernel ep} gives
\begin{equation}\label{e:krr'big}\begin{split}
 |K(r,r') | &\lesssim h^{-2} (rr')^{1/2}(J_\nu(\nu z)+e^{-2 \nu \xi_0} |Y_\nu(\nu z)|) |J_\nu(\nu z')+iY_\nu(\nu z')| \\&\lesssim h^{-2}\nu^{-1} (rr')^{1/2}\langle z\rangle^{-1/2}\langle z'\rangle^{-1/2} \lesssim h^{-1},
\end{split}\end{equation}
as desired. \\
\end{proof}

\section{One dimensional resolvent estimates for fixed angular momentum}\label{s:low}

We now study the equation
$$
(P_{m}-E-i\ep)  u =(-h^2\partial_r^2+V(r)+mr^{-2}-E - i\ep)u=f,\qquad \quad m \ge - \frac{h^2}{4},
$$
on $L^2 = L^2(\R_+) \defeq L^2(0, \infty)$. Recall that the self-adjoint extension of $(P_m, C^\infty_0(\R_+))$ we employ is its Friedrichs extension $(P_m, \dom_m)$. Put $\dom_{\text{comp},m} \defeq \{u \in \dom_m : \supp u \text{ is compact in $[0,\infty)$}\}$. 
\begin{lemma}
\label{l:oneDResolve}
Let $[E_{\min},E_{\max}] \subseteq (0, \infty)$ and $1/2 < s \le 1$. Then there is $C, \, h_0 >0$ such that for all $E\in [E_{\min},E_{\max}]$, $h \in (0, h_0]$, $m \ge -h^2/4 $, $0 \le  \ep \leq 1$, and $u\in \dom_{\emph{comp},m}$,
$$
\|\langle r\rangle^{-s}u\|_{L^2(\R_+)}\leq e^{C(1+|m|^{1/2})/h}\|\langle r\rangle^{s}(P_m - E - i\ep)u\|_{L^2(\R_+)}.
$$
\end{lemma}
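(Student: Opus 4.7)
My plan is to adapt the approach of \cite[Proposition 3.1]{vo21} (which treats $m \geq 0$) to the broader range $m \geq -h^2/4$ required in dimension $n = 2$. The argument splits $(0, \infty)$ into a neighborhood of the origin, where b-calculus / Mellin transform techniques handle the centrifugal singularity with no exponential loss, and its complement, where a spherical energy (Carleman) estimate supplies the $e^{C(1+|m|^{1/2})/h}$ factor. Fix $r_0 > 0$ small (depending only on $[E_{\min}, E_{\max}]$), a cutoff $\chi_0 \in C_0^\infty([0, 2r_0))$ with $\chi_0 \equiv 1$ on $[0, r_0]$, and write $f \defeq (P_m - E - i\ep) u$.

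For the region near $r = 0$, set $v = \chi_0 u$, so that $(P_m - E - i\ep) v = \chi_0 f + [P_m, \chi_0] u$ with $[P_m, \chi_0] = -h^2(\chi_0'' + 2\chi_0' \partial_r)$ supported in $[r_0, 2r_0]$. Passing to the logarithmic variable $t = \log r$ via the ansatz $u(r) = r^{1/2} \phi(\log r)$ yields
\begin{equation*}
r^{3/2} \bigl(-h^2 \partial_r^2 + m r^{-2}\bigr) u(r) = \Bigl(-h^2 \partial_t^2 + m + \tfrac{h^2}{4}\Bigr) \phi(t)\Big|_{t = \log r},
\end{equation*}
so that after Fourier transform in $t$ the principal part becomes the non-negative multiplier $h^2 \tau^2 + (m + h^2/4)$, which is non-negative precisely when $m \geq -h^2/4$. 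Plancherel then delivers an estimate for $\|r^{-1} v\|_{L^2}$ in terms of $\|r f\|_{L^2}$ plus the commutator contribution; the lower-order $V - E - i\ep$ terms are absorbed by shrinking $r_0$, using the boundedness of $V$ and the extra $r$-powers available near the origin.

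For $r \geq r_0$, I would apply a Carleman weight $e^{\varphi(r)/h}$, with $\varphi'(r)$ essentially of size $\sqrt{\max(0, V(r) + m r^{-2} - E)}$ on the forbidden region (plus a lower-order piece of size $h \langle r \rangle^{-1-2s}$ to handle the outgoing radiation condition). Because $V$ is compactly supported and $E m r^{-2}$ crosses zero near $r \sim |m|^{1/2} E^{-1/2}$, the total growth is $\varphi(\infty) - \varphi(r_0) \leq C(1 + |m|^{1/2})$. A standard positive-commutator computation for the conjugated operator $e^{\varphi/h}(P_m - E - i\ep)e^{-\varphi/h}$, together with the hypothesis $s > 1/2$, yields
\begin{equation*}
\bigl\| e^{\varphi/h} \langle r \rangle^{-s} u \bigr\|_{L^2(r \geq r_0)} \leq \tfrac{C}{h} \bigl\| e^{\varphi/h} \langle r \rangle^s f \bigr\|_{L^2} + (\textrm{boundary term at } r_0).
\end{equation*}
Unweighting and combining with the Mellin estimate (whose commutator term sits in $[r_0, 2r_0]$, where the Carleman weight is bounded, and which in turn absorbs the Carleman boundary contribution at $r_0$) produces the stated bound.

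The main obstacle is the threshold case $m \to -h^2/4$, where the b-symbol $h^2 \tau^2 + (m + h^2/4)$ collapses to $h^2 \tau^2$ and loses ellipticity at $\tau = 0$; this is precisely the regime that \cite{vo21} does not cover. Closing the Mellin estimate here requires the specific Friedrichs domain $\dom_m$, whose elements vanish like $r^{\nu + 1/2}$ with $\nu = h^{-1}(m + h^2/4)^{1/2} \geq 0$, together with a Hardy-type inequality to recover $\|r^{-1} u\|$ from $\|\partial_r u\|$. A second delicate point is tailoring $\varphi$ so that its total growth is exactly $O(1 + |m|^{1/2})$ and not worse; this requires tracking the turning point of $V + m r^{-2} - E$ as $m$ varies.
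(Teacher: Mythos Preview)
Your overall architecture---Mellin/b-calculus control near $r=0$ glued to an energy (Carleman) estimate away from the origin---is exactly the paper's. However, several of the implementation choices you make are the ones that do \emph{not} close, and the paper's proof differs from your outline precisely at those points.

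\textbf{The contour.} Your substitution $u=r^{1/2}\phi(\log r)$ followed by the real Fourier transform is, in the paper's Mellin language, integration along the line $\Im\sigma=1/2$. But $t_\pm(-h^2/4)=1/2$ is the double root of the indicial polynomial $t^2-t-mh^{-2}$, which is why your multiplier $h^2\tau^2+(m+h^2/4)$ collapses. Hardy plus the Friedrichs condition will not rescue this: at $m=-h^2/4$ you would only control $\|u'-\tfrac{1}{2r}u\|_{L^2}$, and since $r^{1/2}$ saturates Hardy you cannot recover $\|r^{-1}u\|_{L^2}$ from it. The paper's fix is simply to move the contour off the zero: it takes $t_0=-\tfrac12$ for $-h^2/4\le m\le h^2/4$ and $t_0=1$ for $m>h^2/4$, which yields $\Lambda(t_0,m)\lesssim\langle h^{-2}m\rangle^{-1}$ uniformly. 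The Friedrichs domain is used, but only in the second regime, and for a different purpose: it forces $r^{-1}u\in L^2$, which makes the residue $\Pi_{t_0}$ picked up at $t_-$ vanish.

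\textbf{The cutoff scale.} A fixed $r_0$ cannot work. Absorbing the $(V-E-i\ep)$ term into the left side of the Mellin estimate requires (with your multiplier) $r_0^2\lesssim m+h^2/4$, which fails as $m\to -h^2/4$; after the contour shift the requirement becomes $r_0^2\lesssim h^2\langle h^{-2}m\rangle$. The paper accordingly takes the $m,h$-dependent scale $\delta(m)=\delta_0 h\langle h^{-2}m\rangle^{1/2}$.

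\textbf{The weight.} With an Agmon weight $\varphi'\sim\sqrt{(V+mr^{-2}-E)_+}$ started at a fixed $r_0$, the total growth is $\int_{r_0}^{\sqrt{m/E}}\sqrt{mr^{-2}-E}\,dr\sim\sqrt{m}\,\log(\sqrt{m}/r_0)$, not $\sqrt{m}$; your claim $\varphi(\infty)-\varphi(r_0)\le C(1+|m|^{1/2})$ is false with this choice. The paper instead uses the non-WKB weight $\psi'=\eta^{-1}\big(|V|+2|m|r^{-2}\mathbf{1}_{r\gtrsim\delta}\big)+\langle r\rangle^{-2s}$ and starts the energy estimate at $\delta(m)\sim\sqrt{m}$ (for large $m$), so that $\int_{\delta}^\infty |m|r^{-2}\,dr\sim|m|/\delta\sim|m|^{1/2}$ exactly.
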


To prove Lemma~\ref{l:oneDResolve}, we start by studying the operator
$$
Q_m \defeq -\partial_r^2+mh^{-2}r^{-2}, \qquad m \ge - \frac{h^2}{4},
$$
on functions compactly supported in $[0,\infty)$. We recall the Mellin transform and its inverse:
$$
\mathcal{M}(u)(\sigma):=\int_0^\infty r^{i\sigma} u(r)\frac{dr}{r},\qquad \mathcal{M}_t^{-1}(v)(r):=\frac{1}{2\pi}\int_\R r^{-i\sigma}v(\sigma)d\tau, \qquad \sigma = \tau + it,
$$
where the definitions hold initially, e.g., for $u \in C_0^\infty(\R_+)$ and $v( (\cdot) + it) \in L^1_\tau(\R) \cap L^2_\tau(\R)$, and then extend by density to bounded operators $\mathcal{M} : L^2(0, \infty ; r^{-2t -1}dr) \to L^2_\tau(\R)$, \\$\mathcal{M}_t^{-1} : L^2_\tau(\R) \to L^2(0, \infty ; r^{-2t -1}dr)$. Moreover, since $\mathcal{M}(u)(\tau + it) = 2 \pi \mathcal{F}^{-1}( e^{-tx} u(e^{x}))(\tau)$, $x \in \R$, and $\mathcal{M}^{-1}_t(v)(r) = r^t \mathcal{F}(v)(\log r)/2\pi$, where $\mathcal{F}$ denotes Fourier transform,
\begin{equation} \label{almost unitarity}
\begin{gathered}
\|\mathcal{M}(u)(\tau+it)\|_{L^2_\tau(\mathbb{R})} = (2\pi)^{1/2}\|r^{-t-1/2}u\|_{L^2(\R_+)}, \\
\| r^{-t-1/2}\mathcal{M}_t^{-1}(v)(r)\|_{L^2(\R_+)}= (2\pi)^{-1/2}\|v\|_{L^2_\tau(\mathbb{R})}. 
\end{gathered}
\end{equation}

Let 
$$
t_\pm = t_{\pm}(m):=\frac{1\pm\sqrt{1+4mh^{-2}}}{2},\qquad \Lambda(t,m):=| t^2-t-h^{-2}m|^{-1}, \qquad t \neq t_\pm.
$$ 

\begin{lemma}
\label{l:nearZero}
There is $C>0$ such that for $m\geq -h^2/4$, $N\in \mathbb{R}$, $t_0 \in \mathbb{R}\setminus\{ t_+(m),t_-(m)\}$, and $u\in r^{-N}L^2_{\comp}[0, \infty)$ with $Q_mu\in r^{t_0-\frac{3}{2}}L^2$, we have
\begin{equation} \label{id from Mellin transf}
u= \Pi_{t_0}(r^2Q_mu)+E_{t_0}(r^2Q_mu),
\end{equation}
where
\begin{equation} \label{E sub t est} 
\|r^{-t_0-\frac{1}{2}}E_{t_0}v\|_{L^2}\leq C \Lambda(t_0,m) \|r^{-{t_0}-\frac{1}{2}}v\|_{L^2(\R_+)},
\end{equation}
and 
$$
\Pi_{t_0}v=\begin{cases} 0&t_0<t_-,\\
 r^{\frac{1}{2}}\log r\mathcal{M}(v)(\tfrac{i}{2})-r^{\frac{1}{2}}\mathcal{M}(\log rv)(\tfrac{i}{2})&t_-<t_0,\,m=-\tfrac{h^2}{4},\\
\frac{r^{t_-}\mathcal{M}(v)(it_-)}{t_--t_+}&t_-<t_0<t_+,\,m>- \tfrac{h^2}{4},\\
\frac{r^{t_-}\mathcal{M}(v)(it_-)}{t_--t_+}+\frac{r^{t_+}\mathcal{M}(v)(it_+)}{t_+-t_-}&t_+<t_0,\, m>-\tfrac{h^2}{4}.
\end{cases}
$$
\end{lemma}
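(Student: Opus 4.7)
The plan is to use the Mellin transform to diagonalize $r^2 Q_m$ and then recover $u$ by inversion with a contour shift. The key algebraic identity is $r^2 Q_m = (r\partial_r)^2 - r\partial_r + mh^{-2}$, which under $\mathcal{M}$ (where $r\partial_r$ acts as multiplication by $-i\sigma$) becomes multiplication by
\[
p(\sigma) := \sigma^2 - i\sigma + mh^{-2} = (\sigma - it_+)(\sigma - it_-),
\]
whose zeros are precisely $\sigma = it_\pm$. I would first verify, via integration by parts on $C_0^\infty(\R_+)$, that $\mathcal{M}(r^2 Q_m u)(\sigma) = p(\sigma) \mathcal{M}(u)(\sigma)$ on any horizontal line $\{\operatorname{Im}\sigma = t\}$ on which both $u$ and $r^2 Q_m u$ lie in $L^2(r^{-2t-1} dr)$. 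Next, to obtain \eqref{E sub t est}, define $E_{t_0} v := \mathcal{M}^{-1}_{t_0}(\mathcal{M}(v)/p)$ and observe that on the line $\operatorname{Im} \sigma = t_0$, with $\sigma = \tau + i t_0$,
\[
|p(\sigma)|^2 = \bigl(\tau^2 + (t_0 - t_+)^2\bigr)\bigl(\tau^2 + (t_0 - t_-)^2\bigr) \ge (t_0 - t_+)^2 (t_0 - t_-)^2 = \Lambda(t_0, m)^{-2},
\]
so $|p(\sigma)|^{-1} \le \Lambda(t_0, m)$ uniformly in $\tau$, and \eqref{E sub t est} then follows from the Plancherel-type identities \eqref{almost unitarity}.

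For the identity \eqref{id from Mellin transf}, I would choose $t_1 < \min\{t_0,\, t_-(m)\}$ so negative that $u \in L^2(r^{-2t_1 - 1} dr)$, which is possible because $u \in r^{-N} L^2_{\comp}[0, \infty)$; Mellin inversion on the line $\operatorname{Im} \sigma = t_1$ then gives $u = \mathcal{M}^{-1}_{t_1}(\mathcal{M}(r^2 Q_m u)/p)$ by the symbol identity. Applying Cauchy's theorem to the rectangle $\{\sigma : |\operatorname{Re}\sigma| \le R,\ t_1 \le \operatorname{Im}\sigma \le t_0\}$ and letting $R \to \infty$ yields $u = E_{t_0}(r^2 Q_m u) + i \sum \operatorname{Res}$, where the residues of $r^{-i\sigma} \mathcal{M}(v)(\sigma)/p(\sigma)$ are taken at the simple poles $\sigma = it_\pm$ (when $m > -h^2/4$ and they lie in the strip) or at the double pole $\sigma = i/2$ (when $m = -h^2/4$ and $t_0 > 1/2$). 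In the simple-pole case the residue equals $r^{t_\pm} \mathcal{M}(v)(i t_\pm)/[i(t_\pm - t_\mp)]$, and multiplication by $i$ reproduces exactly the terms in the definition of $\Pi_{t_0} v$; for the double pole, $\tfrac{d}{d\sigma}[r^{-i\sigma}\mathcal{M}(v)(\sigma)]|_{\sigma = i/2}$ together with the identity $\mathcal{M}(v)'(\sigma) = i \mathcal{M}(v \log r)(\sigma)$ produces both the $r^{1/2} \log r$ and $r^{1/2}$ terms of the degenerate formula.

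The main obstacle is making the contour shift rigorous, since $\mathcal{M}(v)/p$ is a priori only $L^2$ on the relevant horizontal lines, with no pointwise decay in $\tau$ guaranteed for a routine residue-theorem argument. To circumvent this I would first establish \eqref{id from Mellin transf} for the dense subclass $u \in C_0^\infty(\R_+)$: then $v = r^2 Q_m u$ has compact support in $\R_+$, so $\mathcal{M}(v)(\sigma)$ is entire and of rapid decay on horizontal strips by Paley-Wiener-type integration-by-parts bounds, making the vertical side contributions of the rectangle vanish as $R \to \infty$. The identity for general $u$ in the stated class then follows by a density argument using the boundedness of $E_{t_0}$ between the relevant weighted $L^2$ spaces and the continuous dependence of $\Pi_{t_0} v$ on $v$ through the finitely many point evaluations $\mathcal{M}(v)(i t_\pm)$ (or the associated derivative data in the degenerate case).
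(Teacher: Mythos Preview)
Your overall structure---Mellin transform, the symbol identity $\mathcal{M}(r^2 Q_m u)=p(\sigma)\mathcal{M}(u)$, inversion on a low line, contour shift, residues---matches the paper exactly, and your lower bound $|p(\tau+it_0)|\ge |t_0-t_+|\,|t_0-t_-|=\Lambda(t_0,m)^{-1}$ via the factored form is in fact a clean way to obtain \eqref{E sub t est}.

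Where you diverge from the paper is in how you make the contour shift rigorous, and here there is a real gap. You correctly flag that one needs decay of $\mathcal{M}(v)/p$ along the vertical sides, but you misdiagnose the difficulty: the hypothesis already gives this directly. Since $u$ has compact support in $[0,\infty)$, so does $v=r^2 Q_m u$; together with $v\in r^{t_0+1/2}L^2$, Cauchy--Schwarz yields
\[
|\mathcal{M}(v)(\sigma)|\le \Big(\int_0^{R}r^{2(t_0-\Im\sigma)-1}\,dr\Big)^{1/2}\|r^{-t_0-\frac12}v\|_{L^2},
\]
which is a finite bound, uniform in $\tau=\Re\sigma$, on any strip $\{t_1\le\Im\sigma\le t_0-\epsilon\}$. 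The $1/|p(\sigma)|=O(R^{-2})$ then kills the vertical sides immediately, and the same bound makes the point values $\mathcal{M}(v)(it_\pm)$ well-defined for the given $u$. This is precisely what the paper does; no approximation is needed.

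By contrast, your density route from $C_0^\infty(\R_+)$ has two problems. First, you would need to approximate $u$ in a graph-norm topology controlling both $u\in r^{-N}L^2$ and $Q_m u\in r^{t_0-3/2}L^2$, and cutting off near $r=0$ interacts badly with the $r^{-2}$ coefficient, so such an approximation is not obviously available. Second, and more seriously, the ``continuous dependence of $\Pi_{t_0}v$ on $v$ through the point evaluations $\mathcal{M}(v)(it_\pm)$'' that you invoke is false with respect to the only topology you have established convergence in: point evaluation at an interior point $it_\pm$ is not bounded on $L^2(\{\Im\sigma=t_0\})$. So drop the density argument and use the compact-support hypothesis to get the $L^\infty_\tau$ bound on $\mathcal{M}(v)$ directly.
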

\textbf{Remark:}  Applying $(r \partial_r)^j$, $j = 1, \, 2$, to the expression for $E_{t_0} v$, see \eqref{decompose E Pi} below, yields a strengthening of \eqref{E sub t est}, namely 
$$\|r^{-t_0-\frac{1}{2}}(r\partial_r)^jE_{t_0}v\|_{L^2}\leq \Lambda_j(t_0,m) \|r^{-{t_0}-\frac{1}{2}}v\|_{L^2(\R_+)},\qquad j=0, \, 1, \, 2,
$$
However, we omit the proof since in the sequel we do not need the estimates for $j = 1, \, 2$.
\begin{proof}
Without loss of generality, we take $N$ positive and large enough so that $-N < t_0,\, t_+, \, t_-$. Since $u\in r^{-N}L_{\comp}^2$ and $r^2Q_mu\in r^{t_0+\frac{1}{2}}L^2_{\comp}$, $\mathcal{M}(u)(\sigma)$ is holomorphic in $\Im \sigma<-N- 1/2$, while $\mathcal{M}(r^2Q_mu)(\sigma)$, is holomorphic in $\Im \sigma<t_0$ and extends continuously (as an $L^2$ function) to $\Im \sigma=t_0$. In addition, 
\begin{equation} \label{relate Mellin transf}
\mathcal{M}(r^2Q_mu)(\sigma)=(\sigma^2 - i \sigma + mh^{-2})\mathcal{M}(u)(\sigma),\qquad \Im \sigma<-N - \frac{1}{2}.
\end{equation}
In particular, \eqref{relate Mellin transf} implies $\mathcal{M}{u}(\sigma) \in L^1_\tau(\R) \cap L^2_\tau(\R)$ for $\Im \sigma < - N - 1/2$, so by Fourier inversion,
\begin{equation} \label{Mellin inv u}
u(r)=\frac{1}{2\pi} \int_{\Im \sigma=-N-1} \frac{r^{-i\sigma}\mathcal{M}(r^2Q_mu)(\sigma)}{\sigma^2-i\sigma+mh^{-2}}d\sigma.
\end{equation}
We now deform the contour to $\Im\sigma=t_0-\e$ and the send $\e\to 0$. From \eqref{Mellin inv u},
$$
u(r)=\frac{1}{2\pi} \lim_{R\to \infty}\int_{\gamma_{R,-N-1}} \frac{r^{-i\sigma}\mathcal{M}(r^2Q_mu)(\sigma)}{\sigma^2-i\sigma+mh^{-2}}d\sigma,\qquad 
\gamma_{R,t}=\{ \tau+it\,:\, \tau \in[-R,R]\}.
$$
Next, observe that since, in $\Im \sigma<t_0$, $\|\mathcal{M}(r^2Q_mu)(\sigma)\|_{L_\tau^\infty(\R)}$ is independent of $\Re \sigma$,
$$
\Big|\int _{\gamma_{\pm,R,-N, \e}} \frac{r^{-i\sigma}\mathcal{M}(r^2Q_mu)(\sigma)}{\sigma^2-i\sigma+mh^{-2}}d\sigma\Big|\leq \frac{C_\e}{R^2} \to 0, \qquad \text{as $R \to \infty$.} 
$$
where 
$$\gamma_{\pm,R,-N, \e}:=\{ \pm R+it\,:\, t\in [-N-1,t_0-\e]\}.$$
In particular, using $t_0\neq t_{\pm}(m)$ and that $\mc{M}(r^2Q_mu)(\tau + it) $ varies continuously in $L^2_\tau(\R)$ for $t \le t_0$, we send $\e\to 0$ to obtain

\begin{equation} \label{decompose E Pi}
\begin{split}
u(r)&=\frac{1}{2\pi} \int_{\Im \sigma=t_0}\frac{r^{-i\sigma}\mathcal{M}(r^2Q_mu)(\sigma)}{\sigma^2-i\sigma+mh^{-2}}d\sigma + i \sum_{\substack{\sigma^2-i\sigma+mh^{-2}=0\\\Im \sigma<t_0}}\operatorname{Res}\Big(r^{-i\sigma}\frac{\mathcal{M}(r^2Q_mu)(\sigma)}{\sigma^2-i\sigma +mh^{-2}}\Big) \\
&=:E_{t_0}(r^2Q_mu)+\Pi_{t_0}(r^2Q_mu).
\end{split}
\end{equation}

The formula for $\Pi_{t_0}$ follows from calculating residues at $t_{\pm}(m)$, while the bound on $E_{t_0}$ follows from minimizing the modulus
$$
|(\tau + it_0)^2-i(\tau + it_0) +mh^{-2}|^2 = (-t_0^2 + t_0 +mh^{-2} + \tau^2)^2 + \tau^2(2t_0-1)^2,
$$
with respect to $\tau$.\\
\end{proof}

\begin{lemma}
\label{l:domain}
Suppose $m> 0$ and $u\in \dom_{m}$. Then $r^{-1}u\in L^2(\R_+)$. 
\end{lemma}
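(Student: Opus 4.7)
The plan is to use the variational characterization of the Friedrichs extension $(P_m,\dom_m)$. Since $V\in L^\infty$ and $m>0$, the quadratic form associated to $P_m$ on $C_0^\infty(\R_+)$,
\[
\mathfrak{q}_m(\varphi,\varphi) := \langle P_m\varphi,\varphi\rangle_{L^2(\R_+)} = h^2\|\varphi'\|_{L^2(\R_+)}^2 + \int_0^\infty V|\varphi|^2\,dr + m\,\|r^{-1}\varphi\|_{L^2(\R_+)}^2,
\]
contains $\|r^{-1}\varphi\|_{L^2}^2$ with strictly positive coefficient $m$. Setting $C_0 := \|V\|_{L^\infty}+1$ yields
\[
\mathfrak{q}_m(\varphi,\varphi) + C_0\|\varphi\|_{L^2}^2 \;\ge\; \|\varphi\|_{L^2}^2 + m\,\|r^{-1}\varphi\|_{L^2(\R_+)}^2,\qquad \varphi\in C_0^\infty(\R_+).
\]
By the standard construction of the Friedrichs extension (\cite[Theorem X.23]{reedsimon2}, already cited in Section~\ref{reduction section} to introduce $\dom_m$), the operator domain $\dom_m$ is contained in the form domain $Q_m$, where $Q_m$ is the completion of $C_0^\infty(\R_+)$ under the form norm $\|\varphi\|_Q := (\mathfrak{q}_m(\varphi,\varphi)+C_0\|\varphi\|_{L^2}^2)^{1/2}$.

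Given $u\in\dom_m\subseteq Q_m$, I would then pick a sequence $\varphi_n\in C_0^\infty(\R_+)$ with $\varphi_n\to u$ in $\|\cdot\|_Q$, and so also in $L^2(\R_+)$. Since $\{\varphi_n\}$ is Cauchy in $\|\cdot\|_Q$, the preceding inequality implies $\{r^{-1}\varphi_n\}$ is Cauchy in $L^2(\R_+)$; let $g$ denote its $L^2$-limit. Extracting a subsequence along which $\varphi_n\to u$ pointwise almost everywhere gives $r^{-1}\varphi_n\to r^{-1}u$ pointwise almost everywhere, hence $g=r^{-1}u$ and $r^{-1}u\in L^2(\R_+)$. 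The only point requiring any attention is the identification of $\dom_m$ with a subset of the form domain $Q_m$, which is exactly the content of the cited Reed--Simon theorem; no direct ODE analysis of $u$ near $r=0$ is required, and indeed the pointwise bound on $r^{-1}u$ that would follow from \eqref{nz characterize} in the limit circle case ($\nu>1/2$ when $m>0$) is consistent with the conclusion above.
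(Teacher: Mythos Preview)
Your proof is correct and takes essentially the same approach as the paper's: both establish the quadratic form inequality $m\|r^{-1}\varphi\|_{L^2}^2 \le \mathfrak{q}_m(\varphi,\varphi) + C\|\varphi\|_{L^2}^2$ on $C_0^\infty(\R_+)$ and then invoke the Friedrichs extension property (\cite[Theorem X.23]{reedsimon2}) to approximate $u\in\dom_m$ by test functions. The only cosmetic difference is that the paper passes to the limit via Fatou's lemma on a pointwise-a.e.\ convergent subsequence, whereas you use the Cauchy property in $L^2$ and then identify the limit; both are valid.
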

\begin{proof}
Observe that there exists $C> 0$ so that, 
$$
m \|r^{-1} u \|_{L^2(\R_+)}^2 \le C\|u\|_{L^2(\R_+)}^2 +  |\langle P_m u, u \rangle_{L^2(\R_+)}|, \qquad u\in C^\infty_0(\R_+).
$$
As $(P_m, \dom_m)$ is the Friedrichs extension of $(P_m, C_0^\infty(0, \infty))$, by \cite[Theorem X.23]{reedsimon2}, any $u \in \dom_m$ has a sequence $\{  \psi_k \} \subseteq C^\infty_0(\R_+)$ such that $\psi_k \to u$ in $L^2(\R_+)$ and $\langle P_m \psi_k, \psi_k \rangle_{L^2}\to \langle P_m u, u \rangle_{L^2(\R_+)} $. We also get pointwise almost everywhere convergence by extracting a subsequence of the $\psi_k$, which we again just call $\psi_k$. Sending $k \to \infty$ and using Fatou's lemma gives \\ $m \|r^{-1}u\|_{L^2(\R_+)}^2 \le C\|u\|_{L^2(\R_+)}^2 +  \|P_m u \| _{L^2(\R_+)}\| u\|_{L^2(\R_+)} < \infty $ for general $u \in \dom_m$ as desired. \\
\end{proof} 

Lemma~\ref{l:nearZero} will allow us to control the behavior of solutions, $u$ to $(P_m - E - i\ep)u=f$ near $0$. 
\begin{proof}[Proof of Lemma~\ref{l:oneDResolve}]
To shorten notation, we set $P_{m,E,\ep}=P_m-E-i\ep$. Since $u\in \dom_{\comp,m}$ and $u\in L^2$, for any compact $K\subseteq (0,\infty)$, there exists $C_{K,h} > 0$ so that  
$$
\|\partial_r^2u\|_{L^2(K)}\leq C_{K,h}(\|u\|_{L^2(K)}+\|P_{m}u\|_{L^2(K)}).
$$ 
In particular, $u\in H^2_{\loc}(0,\infty)$.

Next, set 
\begin{equation} \label{defn deltas}
\delta = \delta(m):=\delta_0 
h\langle h^{-2}m\rangle^{1/2},\qquad \delta_1 = \delta_1(m) \defeq \max(\delta(m),\tfrac{1}{2}),
\end{equation}
for some $0<\delta_0\ll 1$ independent of $h$, $m$ and $E\in [E_{\min},E_{\max}]$, to be chosen. Let $\chi\in C_0^\infty[0,2)$ with $\chi \equiv 1$ near $[0,1]$. Set $\chi_{\delta_1}=\chi(\delta^{-1}_1 r)$. 
Then
\begin{equation} \label{rewrite Q m}
\begin{split}
Q_m\chi_{\delta_1} u &= h^{-2}P_{m,E,\ep}\chi_{\delta_1}  u -h^{-2}(V-E - i\ep)\chi_{\delta_1} u\\
&=h^{-2}\chi_{\delta_1}  P_{m}u +h^{-2}[P_{m},\chi_{\delta_1}  ]u-h^{-2} \chi_{\delta_1} (V-E - i\ep)u.
\end{split}
\end{equation}
In particular, since $u\in  H^2_{\loc}(0,\infty)$, $P_mu\in L^2$, and $\chi_{\delta_1}$ is constant near zero, $r^2Q_m \chi_{\delta_1} u\in r^{2}L^2$.  Setting 
$$
t_0 = t_0(m) \defeq \begin{cases}-\frac{1}{2}& -\frac{h^2}{4}\leq m\leq \frac{h^2}{4},\\\ 1&\frac{h^2}{4}<m,
\end{cases}
$$
observe that
\begin{equation} \label{est Lambda}
\Lambda(t_0,m)=|t^2_0 - t_0- h^{-2}m|^{-1}\leq c\langle h^{-2}m\rangle^{-1}
\end{equation}
for some $c > 0$ independent of $h$ and $m$.
Note also that with this definition of $t_0(m)$, $t_0(m)<t_-(m)$ for $-h^2/4 \leq  m \leq h^2/4$, and $t_-(m)<t_0(m)<t_+(m)$ for $h^2/4<m$. Therefore, by Lemma~\ref{l:nearZero}, since $u\in L^2$, $r^2Q_m\chi_{\delta_1} u\in r^2L^2$, and $t_0(m)\leq 3/2$,
$$
\chi_{\delta_1} u= E_{t_0}(r^2Q_m\chi_{\delta_1} u)+\Pi_{t_0}(r^2Q_m\chi_{\delta_1} u),
$$
where
$$
\Pi_{t_0}(r^2Q_m\chi_{\delta_1} u)=\begin{cases}0&-\frac{h^2}{4}\leq m\leq \frac{h^2}{4},\\\frac{r^{t_-}}{t_- - t_+ }\mathcal{M}(r^2Q_m\chi_{\delta_1} u)(it_-)&\frac{h^2}{4}<m,\end{cases}
$$
and
$$\|r^{-t_0-\frac{1}{2}} E_{t_0}v\|_{L^2}\leq C \Lambda(t_0,m) \|r^{-t_0-\frac{1}{2}}v\|_{L^2}.$$

Next, by Lemma~\ref{l:domain}, for $m> 0$, $r^{-1}u\in L^2$. Thus, both $\mc{M}(\chi_{\delta_1 }u)(\sigma)$ and $\mathcal{M}(r^2Q_m\chi_{\delta_1} u)(\sigma)$ are holomorphic in $\imag \sigma < 1/2$. As $t_- < 1/2$ when $m > 0$, by \eqref{relate Mellin transf},
$$
\mathcal{M}(r^2Q_m\chi_{\delta_1} u)(it_-)= ((it_-)^2-i(it_-)-h^{-2}m)\mc{M}(\chi_{\delta_1 }u)(it_-)=0.
$$
In particular, for any $m$ and $u\in \mc{D}_{\comp,m}$, $\Pi_{t_0}(r^2Q_m\chi_{\delta_1} u)=0$.
Thus, by \eqref{rewrite Q m} and $\delta_1 \ge 1/2$,
\begin{equation}
\label{e:squigglyA}
\begin{aligned}\|r^{-t_0-\frac{1}{2}}u\|_{L^2(0,\delta_1)}&\leq \|r^{-t_0-\frac{1}{2}}\chi_{\delta_1} u\|_{L^2(0,\delta_1)} \leq C\Lambda(t_0,m)\|r^{\frac{3}{2}-t_0}Q_m\chi_{\delta_1} u\|_{L^2}\\
&\leq C\Lambda(t_0,m)\Big(h^{-2}\|r^{\frac{3}{2}-t_0}P_{m,E,\ep}u\|_{L^2(0,2\delta_1)}\\
&+ h^{-1} \|r^{\frac{3}{2}-t_0}hu'\|_{L^2(\delta_1,2\delta_1)}+h^{-2}\|r^{\frac{3}{2}-t_0}u\|_{L^2(0,2\delta_1)}\Big).\end{aligned}\end{equation}
We now estimate part of the last term on the right side of \eqref{e:squigglyA}, using \eqref{defn deltas} and \eqref{est Lambda}:
\begin{equation*}
\begin{split}
C\Lambda(t_0,m) h^{-2}\|r^{\frac{3}{2}-t_0}u\|_{L^2(0,\delta)} & \le C\Lambda(t_0,m) h^{-2} \delta^2 \|r^{-t_0 - \frac{1}{2}}u\|_{L^2(0,\delta)}  \\
&\le C c \delta_0 \|r^{-t_0 - \frac{1}{2}}u\|_{L^2(0,\delta)}.
\end{split}
\end{equation*}
Choosing $\delta_0$ small enough, this term may be absorbed into the left side of \eqref{e:squigglyA}, so we find
\begin{equation}
\label{e:squiggly}
\begin{aligned}\|r^{-t_0-\frac{1}{2}}u\|_{L^2(0,\delta_1)}&\leq Ch^{-2}\langle h^{-2}m\rangle^{-1}\Big(\|r^{\frac{3}{2}-t_0}P_{m,E,\ep}u\|_{L^2(0,2\delta_1)}+\\
& h \|r^{\frac{3}{2}-t_0}hu'\|_{L^2(\delta_1,2\delta_1)}+\|r^{\frac{3}{2}-t_0}u\|_{L^2(\delta ,2\delta_1)}\Big).
\end{aligned}
\end{equation}

We now employ the energy method to study the region $[\delta(m),\infty)$. Let $s> 1/2$, $\eta = \min\{1, E\}/2$, and
 $\phi_j\in C_0^\infty([0,1);[0,1])$, $j=0,1,2$, with $\phi_0 \equiv 1$ on $[0,1/2]$ and $\phi_j\equiv 1$ on $\supp \phi_{j-1}$. Let
$$
F(r):= |hu'(r)|^2+E|u(r)|^2,\qquad w(r):=\int_0^r(1-\phi_1( r' /\delta))\phi_2(r' /\delta )d r' e^{\psi(r)/h},
$$
with
\begin{equation*}
\psi \defeq \int_0^r \eta^{-1} \big( |V( r')| + 2|m|(1-\phi_0( r'/\delta))(r')^{-2} \big) +\langle r' \rangle^{-2s}d r' \\
\leq \eta^{-1}\|V\|_{L^1(\R_+)}+C(1+ \eta^{-1} |m|^{1/2}).
\end{equation*}
Here, we have used
\begin{equation*}
\begin{split}
|m| \int^r_{\tfrac{1}{2} \delta(m)} (r')^{-2} dr' = |m| \Big( (\tfrac{1}{2} \delta(m))^{-1} - r^{-1} \Big)  \lesssim |m|^{1/2}. 
\end{split}
\end{equation*}
Then,
\begin{align*}
(wF)'(r)&= -2\Re w\langle P_{m,E,\ep}u,u'\rangle + 2\ep w\Im \langle u,u'\rangle +w'(|hu'|^2 +E|u|^2)+2w\Re \langle (V+mr^{-2})u,u'\rangle,
\end{align*}
where $\langle z, z_1 \rangle \defeq z \overline{z_1}$, $z, z_1 \in \C$. Since $\phi_0(r/\delta) > 0$ implies $w(r) = 0$, we have
\begin{equation}
\label{e:derivativeBig}
\begin{aligned}
 w'&=\frac{\psi'}{h}w+ (1-\phi_1(r/\delta))\phi_2(r/\delta)) e^{\psi/h} \geq \frac{\psi'}{h}w\geq h^{-1}w[\eta^{-1}|V|+\langle r\rangle^{-2s}+ \eta^{-1}|m|r^{-2}].
 \end{aligned}
\end{equation}
So,
\begin{align*}
(wF)'(r)&\geq -2\Re w\langle P_{m,E,\ep}u,u'\rangle + 2\ep w\Im \langle u,u'\rangle +w'(|hu'|^2 +E|u|^2)+2w|\Re \langle (V+mr^{-2})u,u'\rangle|\\
&\geq  - h^{-1}\eta^{-1}\langle r\rangle^{2s}w|P_{m,E,\ep}u|^2 -h^{-1}\eta w\langle r\rangle^{-2s}|hu'|^2\\
&-h^{-1}(|V|+\ep+|m|r^{-2})w(|u|^2+|hu'|^2)+ \min\{1, E\} w'(|hu'|^2 +|u|^2)\\
&\geq - h^{-1}\eta^{-1}\langle r\rangle^{2s}w|P_{m,E,\ep}u|^2 - \ep h^{-1} w(|u|^2+|hu'|^2)+ \frac{\min\{1, E\}}{2}w'(|hu'|^2 +E|u|^2).
\end{align*}

We integrate from $0$ to $\infty$, and use the facts that $w(r) = 0$ near zero and $u\in H^{2}_{\loc}((0,\infty))$ is compactly supported in $[0,\infty)$:
\begin{equation}
\label{e:wEst}
\begin{aligned}
\|(w')^{1/2}(hu')\|^2_{L^2(0,\infty)}&+\|(w')^{1/2}u\|^2_{L^2(0,\infty)}\\
&\leq  Ch^{-1} \int \langle r\rangle^{2s}w(r)|P_{m,E,\ep}u(r)|^2dr+\ep h^{-1}\int w(|u(r)|^2+|hu'(r)|^2)dr.
\end{aligned}
\end{equation}
Next, observe that 
$$\Re \int w P_{m,E,\ep}u\bar{u}dr= \int w|hu'|^2dr+ \Re \int hw'hu'\bar{u}dr +\int w(V-E+mr^{-2})|u|^2dr.$$
So, using~\eqref{e:derivativeBig}
\begin{equation*}
\begin{split}
\|(w)^{1/2}hu'\|_{L^2}&\leq \frac{ \gamma h}{2}\| (w')^{1/2}hu'\|^2_{L^2}+ \frac{h\gamma^{-1} }{2}\| (w')^{1/2}u\|^2_{L^2}+C\|(w)^{1/2}u\|_{L^2}^2\\&+C\|w^{1/2}|m|^{1/2}r^{-1}u\|_{L^2}^2+\frac{1}{2}\|w^{1/2}\langle r\rangle^{s}P_{m,E,\ep}u\|_{L^2}+\frac{1}{2}\|w^{1/2}\langle r\rangle^{-s}u\|_{L^2}\\
&\leq \frac{\gamma h}{2}\| (w')^{1/2}hu'\|^2_{L^2}+ C(1+\gamma^{-1})h\| (w')^{1/2}u\|^2_{L^2}\\
&+\frac{1}{2}\|w^{1/2}\langle r\rangle^{s}P_{m,E,\ep}u\|_{L^2}+C\|(w)^{1/2}u\|_{L^2}.
\end{split}
\end{equation*}
Plugging this into the right side of \eqref{e:wEst}, taking $\gamma$ small enough, subtracting the corresponding term to the left-hand side, and using,
\begin{equation*}
\label{e:wInfo}
 \begin{gathered}c\delta(m)\langle r\rangle^{-2s}/h\leq w'(r),\qquad r\geq \delta(m), \\
  |w(r)|, |w'(r)| \leq e^{C(1+|m|^{1/2})/h}, \qquad   r \geq 0,\\
 \supp w\subseteq (\tfrac{1}{2}\delta(m),\infty),
 \end{gathered}
\end{equation*}
and $\ep \leq 1$, we have
\begin{equation}
\label{e:squigglyB}
\begin{aligned}
& \|\langle r\rangle^{-s}(hu')\|_{L^2(\delta ,\infty)}+\|\langle r\rangle ^{-s}u\|_{L^2(\delta ,\infty)}\\
&\leq  e^{C(1+|m|^{1/2})/h}\|\langle r\rangle^{s}P_{m,E,\ep}u\|_{L^2(0,\infty)}+\ep^{1/2}e^{C(1+|m|^{1/2})/h}\|u\|_{L^2(\frac{1}{2}\delta ,\infty)}.
\end{aligned}
\end{equation}
Next, observe that $-1/2\leq t_0(m)\leq 3/2$ implies
\begin{gather*}
\|r^{\frac{3}{2}-t_0}v\|_{L^2(0,2\delta_1)}\leq C \delta_1^{t_0-\frac{3}{2}}\|v\|_{L^2(0,2\delta_1)},\\ \|v\|_{L^2(0,\delta)}\leq C\delta^{t_0+\frac{1}{2}}\|r^{-t_0-\frac{1}{2}}v\|_{L^2(0, \delta)},\\
\|r^{\frac{3}{2}-t_0}v\|_{L^2(\delta_1,2\delta_1)}\leq C\delta_1^{t_0-\frac{3}{2}+s}\|\langle r\rangle^{-s}v\|_{L^2(\delta_1,2\delta_1)}.
\end{gather*}
Combining this with~\eqref{e:squiggly} and~\eqref{e:squigglyB}, and that $\delta_1\leq C\langle m\rangle^{1/2}$, we have 
$$
\begin{aligned}
\|\langle r\rangle^{-s}u\|_{L^2(0,\infty)}\leq e^{C(1+|m|^{1/2})/h}&\|\langle r\rangle^{s}P_{m,E,\ep}u\|_{L^2(0,\infty)}+\ep^{1/2} e^{C(1+|m|^{1/2})/h}\|u\|_{L^2(\frac{1}{2}\delta ,\infty)}.
\end{aligned}
$$
Finally, observe that, since $u \in \dom_m$, for any $\gamma > 0$,
$$
\ep \|u\|^2_{L^2}= |\Im \langle P_{m,E,\ep} u,u\rangle|\leq \|\langle r\rangle^{s}P_{m,E,\ep}u\|_{L^2}\| \langle r\rangle^{-s} u\|_{L^2} \le \tfrac{1}{2\gamma}\|\langle r\rangle^{s}P_{m,E,\ep}u\|^2_{L^2} + \tfrac{\gamma}{2} \| \langle r\rangle^{-s} u\|^2_{L^2},$$
and hence
$$
\begin{aligned}
\|\langle r\rangle^{-s}u\|_{L^2(0,\infty)}\leq e^{C(1+|m|^{1/2})/h}& \|\langle r\rangle^{s}P_{m,E,\ep}u\|_{L^2(0,\infty)}.
\end{aligned}
$$
This completes the proof of Lemma~\ref{l:oneDResolve}.\\
\end{proof}

\begin{proof}[Proof of Proposition~\ref{p:1DResolve}]
Let $1/2<s< 1$, and $W_s\in C^\infty$ with $W_s\equiv 1 $ near $0$ and \\ $c\langle r\rangle^s\leq W_s(r) \leq C\langle r\rangle^{s}$, and $W_s=C\langle r\rangle^s$ on $r \geq 2$. 

First, recall that for any $\chi \in C_0^\infty[0, \infty)$ with $\chi \equiv 1$ near $0$, 
$$
\|u\|_{\dom_m}\sim \|\chi u\|_{\dom_m}+\|(1-\chi)u\|_{H^2}.
$$
Thus, since $[W_s,P_m](W_s)^{-1}:H^2\to H^1$ is supported away from zero, for any $v \in \dom_m$ such that $W_s v \in \dom_m$,
\begin{equation}
\label{e:wsBound}
\|W_s(P_m-E-i\ep)v\|_{L^2}\leq \|[W_s,P_m](W_s)^{-1} W_sv\|_{L^2}+\|(P_m-E-i\ep)W_sv\|_{L^2}\leq C_{\ep, h} \|W_sv\|_{\dom_m}.
\end{equation}

Next, we claim that for $\ep>0$ and $f\in L^2(\R_+)$,
$$
 W_s(P_m-E-i\ep)^{-1}\langle r\rangle^{-s}f\in \dom_m.
$$
Put 
 $$
v \defeq (P_m-E-i\ep)^{-1}\langle r\rangle^{-s}f\in \dom_m.
 $$
 We shall show $W_s v \in \dom_m$ by demonstrating that 
 \begin{equation} \label{Ws v}
 W_s v = (P_m-E-i\ep)^{-1}\langle r\rangle^{-s}(W_s \langle r \rangle^{-s} f + g)\
 \end{equation}
 for suitable $g \in L^2(\R_+)$. 
 
Let $\chi \in C_0^\infty([0,\infty); [0,1])$ with $\chi \equiv 1$, and put $\chi_{k} = \chi(k^{-1} r)$, $ k \in \NN$.
 Almost everywhere on $\R_+$,
 $$
 (P_m - E - i\ep) \chi_k W_s v = \chi_k W_s \langle r\rangle^{-s} f + [P_m, \chi_k W_s]v.
$$
So in particular $P_m \chi_k W_s v \in L^2(\R_+)$, hence $\chi_k W_s v \in \dom(P^*_m)$. This implies that, for each $k$, 
\begin{equation} \label{Ws v prelim}
\chi_k W_s v = (P_m - E - i\ep)^{-1}(\chi_k W_s \langle r\rangle^{-s} f + [P_m, \chi_k W_s]v).
\end{equation}
We complete the proof of \eqref{Ws v} by sending  $k \to \infty$ in \eqref{Ws v prelim} and noting that the $[P_m, \chi_k W_s]$, $[P_m,  W_s]$ are zero on a fixed neighborhood of $r = 0$ (independent of $k$) and that they are uniformly bounded $H^2(\R_+) \to L^2(\R_+)$ because $s \le 1$.

Now, let $v_k=\chi(k^{-1}r)W_sv\in \dom_{\comp, m}$, $k \in \NN$, and note that $v_k \to  W_sv$ in $\dom_m$. Observe that 
\begin{equation}
\label{e:L2Approx}
\|\langle r\rangle^{-s}((W_s)^{-1} v_k-v)\|_{L^2}\leq \|(v_k-W_sv)\|_{L^2}\to 0,
\end{equation}
and by~\eqref{e:wsBound},
\begin{equation}
\label{e:PApprox}
\begin{aligned}
\|\langle r\rangle^s(P-E-i\ep)((W_s)^{-1}v_k-v)\|_{L^2}&\leq C\|W_s(P_m-E-i\ep)((W_{s})^{-1}v_k-v)\|_{L^2}\\
&\leq C_{\ep, h} \|v_k-W_sv\|_{\dom_m}\to 0.
\end{aligned}
\end{equation}
Finally, applying Lemma~\ref{l:oneDResolve},
$$
\|\langle r\rangle^{-s} (W_s)^{-1}v_k\|_{L^2} \leq Ce^{C(1+|m|^{1/2})/h}\|\langle r\rangle^{s}(P_m - E - i\ep)(W_s)^{-1}v_k\|_{L^2}.$$
Sending $k\to \infty$ and using~\eqref{e:L2Approx},~\eqref{e:PApprox}, and the definition of $v$, this implies
$$
\|\langle r\rangle^{-s}(P_m-E-i\ep)^{-1}\langle r\rangle^{-s}f\|_{L^2}\leq Ce^{C(1+|m|^{1/2})/h}\|f\|_{L^2},
$$
completing the proof of the proposition.\\
\end{proof}

\section{Exponential estimates for the resolvent}\label{s:exp thm}
In this section we prove Theorem \ref{exp thm}. As before, we use separation of variables to reduce estimating the resolvent of $P$ to estimating the resolvent of each $P_m$.
By Proposition~\ref{p:1DResolve}, for every $m\geq -h^2/4$ and $0\leq \ep \leq 1$
\[
 \|\langle r \rangle^{-s} (P_m-E-i\ep)^{-1}\langle r \rangle^{-s} \|_{L^2(\R_+) \to L^2(\R_+) } \le e^{C(|m|^{1/2}+1)/h}.
\]
Thus it is enough to prove the following lemma.
\begin{lemma}
There are $C$ and $h_0$ such that
\[
|K(r, r')| = |K(r,r',m,E,0,h)| \le \frac C{h^2}
\]
holds whenever $0 < r \le r'$, $m \ge M_+$, and $0<h<h_0$.
\end{lemma}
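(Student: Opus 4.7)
The plan is to reduce everything to the region already handled by Lemma~\ref{l:rbigmbig} via a monotonicity property of $u_0$ on $(0, R_0]$. The key input is that, since $m \ge M_+ \ge M_0$, the effective potential $W_m(r) := V(r) + m r^{-2} - E$ satisfies $W_m \ge 0$ almost everywhere on $(0, R_0]$. I first claim that $u_0 > 0$ and $u_0' \ge 0$ on $(0, R_0]$. Near zero, Lemma~\ref{l:u0} (via the series~\eqref{defn u0}) gives $u_0(r) \sim c\,r^{\nu+1/2}$ with $c > 0$, and since $\nu = h^{-1}(m + h^2/4)^{1/2} \ge M_+^{1/2}/h > 1/2$ for $h$ small, also $u_0'(r) > 0$ for small $r > 0$. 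Wherever $u_0 > 0$, the ODE yields $u_0'' = W_m u_0/h^2 \ge 0$ a.e.; since $u_0'$ is absolutely continuous, this means $u_0'$ is non-decreasing in the classical sense. Combined with $u_0'(\varepsilon) > 0$ for some small $\varepsilon$, this forces $u_0' > 0$ on $[\varepsilon, R_0]$, ruling out zeros of $u_0$ and giving $u_0(r) \le u_0(s)$ for $0 < r \le s \le R_0$.

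The proof then splits into three cases for $0 < r \le r'$. If $R_0 \le r \le r'$, Lemma~\ref{l:rbigmbig} gives $|K(r, r')| \le C/h$ directly. If $r \le R_0 \le r'$, monotonicity of $u_0$ combined with Lemma~\ref{l:rbigmbig} applied at $(R_0, r')$ gives
\[
|K(r, r')| = \frac{u_0(r)|u_1(r')|}{h^2|W|} \le \frac{u_0(R_0)|u_1(r')|}{h^2|W|} = |K(R_0, r')| \le \frac{C}{h}.
\]

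The essential new case is $0 < r \le r' \le R_0$. Since $u_0 > 0$ on $(0, R_0]$, the Wronskian identity $(u_1/u_0)' = W/u_0^2$ integrates to
\[
u_1(r') = u_0(r')\left[\frac{u_1(R_0)}{u_0(R_0)} - W \int_{r'}^{R_0} \frac{ds}{u_0(s)^2}\right],
\]
which, inserted into the formula for $K$, gives
\[
K(r, r') = -\frac{u_0(r)\,u_0(r')\,u_1(R_0)}{h^2\, u_0(R_0)\, W} \;+\; \frac{u_0(r)\,u_0(r')}{h^2}\int_{r'}^{R_0}\frac{ds}{u_0(s)^2}.
\]
Using $u_0(r), u_0(r') \le u_0(R_0)$ in the first summand bounds its modulus by $|K(R_0, R_0)| \le C/h$ (by Lemma~\ref{l:rbigmbig}). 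In the second summand, monotonicity gives $u_0(r)\,u_0(r') \le u_0(s)^2$ for all $s \in [r', R_0]$, so the integrand is at most $1$ and the term is bounded by $(R_0 - r')/h^2 \le R_0/h^2$. Adding yields $|K(r, r')| \le C/h + R_0/h^2 \le C'/h^2$, which is the desired bound.

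The step I expect to be most delicate is the monotonicity claim itself, since $V$ is only $L^\infty$ and hence $u_0''$ exists merely almost everywhere; however, $u_0'$ is absolutely continuous, so the a.e.\ inequality $(u_0')' \ge 0$ is equivalent to $u_0'$ being non-decreasing in the ordinary sense, and the argument goes through. Everything else is algebraic manipulation of the Wronskian identity together with the monotonicity inequality $u_0(r) \le u_0(s)$.
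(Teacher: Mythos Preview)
Your proof is correct and follows essentially the same approach as the paper: the same three-case split, the same use of monotonicity of $u_0$ on $(0,R_0]$ (which the paper records separately as Lemma~\ref{nonnegativity}), and the same Wronskian integration $(u_1/u_0)' = W/u_0^2$ in the interior case, leading to the identical two-term bound $u_0(R_0)|u_1(R_0)| + (R_0-r')|W|$ on $|u_0(r)u_1(r')|$. Your extra care about absolute continuity of $u_0'$ when $V$ is merely $L^\infty$ is well placed and matches the spirit of the paper's appendix argument.
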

\begin{proof}
We use the fact that $u_0(0) = 0$ and $u_0'(r) \ge 0$ when $r \le R_0$, and consider separately three cases.

\begin{enumerate}
 \item Suppose that $R_0 \le r \le r'$. Then the result follows from the stronger estimate proved in Lemma~\ref{l:rbigmbig}.

\item Suppose that $r \le R_0 \le r'$. Then
\[
 |K(r,r')|  =  u_0(r)\left| \frac{ u_1(r')}{h^2W} \right| \le u(R_0) \frac{ |u_1(r')|}{h^2 |W|} = |K(R_0, r')|  \lesssim h^{-1},
\]
where we used $u_0' \ge 0$ on $(0,R_0)$ and then \eqref{e:krr'big}.

\item Suppose that $r \le r' \le R_0$. Dividing the definition of the Wronskian, $u_0u_1' - u_0' u_1 = W$,  by $u_0^2$ and integrating both sides gives
\[
 u_1(r) = u_0(r) \Big(\frac {u_1(R_0)}{u_0(R_0)} + \int_{R_0}^r \frac W {u_0(s)^2}ds \Big),
\]
thus, using again $u_0' \ge 0$, on $(0,R_0)$
\[
 |u_0(r)u_1(r')| \le u_0(R_0)|u_1(R_0)| + (R_0-r')|W|.
\]
Plugging into \eqref{resolv kernel ep}, using \eqref{e:krr'big} on the first term, and estimating the second term directly gives the result.
\end{enumerate}
\end{proof}

\appendix

\section{Bessel functions}

In this paper we use the Bessel functions $J_\nu(z)$ and $Y_\nu(z)$ \cite[Section 10.2]{dlmf}, where $\nu\geq 0$ and $z>0$, and below we review some standard facts about them.  By \cite[Equation 10.13.1]{dlmf}, the differential equation
\begin{equation}\label{e:wdifeq}
\partial_r^2w + \left( \lambda^2 - r^{-2}(\nu^2 - \tfrac 14)\right)w = 0,
\end{equation}
is solved by $w_1= r^{1/2}J_\nu(\lambda r)$ and $w_2=r^{1/2}Y_\nu(\lambda r)$, for any $\lambda>0$. By \cite[Equation 10.5.2]{dlmf}, we have the Wronskian formula
\begin{equation}\label{e:wwronskian}
w_1 \partial_r w_2 - w_2 \partial_rw_1 = 2 \pi^{-1}.
\end{equation}

We use upper and lower bounds for $J$ and $Y$ derived from Olver's uniform asymptotics for large values of $\nu$. To state them, we use the notation $a \asymp b$ to mean $a \lesssim b$ and $b \lesssim a$. We define a decreasing bijection  $(0,\infty) \ni z \mapsto \zeta(z) \in \mathbb R$ by
\begin{equation}\label{e:zetadef}
 \zeta = \begin{cases}
 \phantom{-}\frac 32 \big(\int_z^1 t^{-1} (1-t^2)^{1/2}dt\big)^{2/3}, \qquad &z \le 1, \\ 
 -\frac 32 \big(\int_1^z t^{-1} (t^2-1)^{1/2}dt\big)^{2/3}, \qquad &z \ge 1,
\end{cases}
 \end{equation}
and use the Airy functions 
\begin{equation}\label{e:airydef}\begin{split}
 \Ai(x) &= \frac 1 \pi \int_0^\infty \cos\left(\tfrac {t^3}3 + xt\right)dt, \\
 \Bi(x) &= \frac 1 \pi \int_0^\infty \left(e^{-\frac{t^3}3 + xt} +\sin\left(\tfrac {t^3}3 + xt\right)\right)dt.
\end{split}\end{equation}
Then, by \cite[Section 10.20]{dlmf} and \cite[Sections 9.3.35--46]{abst}, we have
\begin{equation}\label{e:jyaibi}\begin{split}
J_\nu(\nu z) &\asymp \left(\frac \zeta {1-z^2}\right)^{1/4} \left(\nu^{-1/3} \Ai(\nu^{2/3}\zeta) + \nu^{-5/3} B_0(\zeta)\Ai'(\nu^{2/3}\zeta) \right),\\
-zJ'_\nu(\nu z) &\asymp \left(\frac {1-z^2} \zeta \right)^{1/4} \left(\nu^{-2/3} \Ai'(\nu^{2/3}\zeta) + \nu^{-4/3} C_0(\zeta)\Ai(\nu^{2/3}\zeta)\right),\\
-Y_\nu(\nu z) &\asymp   \left(\frac \zeta {1-z^2}\right)^{1/4} \left(\nu^{-1/3} \Bi(\nu^{2/3}\zeta) + \nu^{-5/3} B_0(\zeta)\Bi'(\nu^{2/3}\zeta) \right),\\
zY'_\nu(\nu z) &\asymp   \left(\frac {1-z^2} \zeta \right)^{1/4} \left(\nu^{-2/3} \Bi'(\nu^{2/3}\zeta) + \nu^{-4/3} C_0(\zeta)\Bi(\nu^{2/3}\zeta)\right),
\end{split}\end{equation}
uniformly for $\nu \gg 1$ and $z >0$, where
$B_0$ and $C_0$ are positive smooth functions of $
\zeta$ obeying
\[
 B_0(\zeta) \asymp 
 \begin{cases} \zeta ^{-1/2}, \qquad &\zeta \ge 1, \\ \zeta ^{-2}, \qquad &\zeta \le -1,
 \end{cases} 
 \qquad C_0(\zeta) \asymp 
 \begin{cases}  \zeta ^{1/2} \qquad &\zeta \ge 1, \\ - \zeta ^{-1}, \qquad &\zeta \le -1. \end{cases}
\]

These bounds become simpler when $z$ is small. More specifically, by \cite[Section 9.7]{dlmf}, we have
\begin{equation}\label{e:airypos}\begin{array}{rclrcl}
\Ai(x) &\asymp& x^{-1/4} \exp(- \tfrac 23 x^{3/2}), \qquad & - \Ai'(x) &\asymp& x^{1/4} \exp(- \tfrac 23 x^{3/2}), \\
\Bi(x) &\asymp& x^{-1/4} \exp( \tfrac 23 x^{3/2}), & \qquad  \Bi'(x) &\asymp& x^{1/4} \exp( \tfrac 23 x^{3/2}),
\end{array}\end{equation}
for $x \gg 1$. Hence, for any $z_0 \in (0,1)$, we have
\begin{equation}\label{e:jyzsmall}\begin{array}{rclrcl}
J_\nu(\nu z) &\asymp&  \nu^{-1/2} e^{- \nu \xi},  \qquad & J'_\nu(\nu z) &\asymp& z^{-1}J_\nu(\nu z),\\
-Y_\nu(\nu z) &\asymp& \nu^{-1/2} e^{\nu \xi}, & \qquad Y'_\nu(\nu z) &\asymp& -z^{-1}Y_\nu(\nu z),
\end{array}\end{equation}
uniformly for $z \in (0,z_0]$ and $\nu \gg 1$, where 
\begin{equation}\label{e:xidef}
\xi = \int_z^1 t^{-1} (1-t^2)^{1/2}dt.
\end{equation}
 
The bounds also become simpler when $z$ is large. More specifically, by by \cite[Section 9.7]{dlmf}, we have
\begin{equation}\label{e:airyneg}\begin{split}
\Ai(-x) &\asymp x^{-1/4}\left( \cos( \tfrac 23 x^{3/2} - \tfrac \pi 4) + \tfrac 5 {48} x^{-3/2} \sin( \tfrac 23 x^{3/2} - \tfrac \pi 4)\right), \\
- \Ai'(-x) &\asymp x^{1/4} \left( \sin( \tfrac 23 x^{3/2} - \tfrac \pi 4) - \tfrac 7 {48} x^{-3/2} \cos( \tfrac 23 x^{3/2} - \tfrac \pi 4)\right), \\
\Bi(-x) &\asymp x^{-1/4}\left( -\sin( \tfrac 23 x^{3/2} - \tfrac \pi 4) + \tfrac 5 {48} x^{-3/2} \cos( \tfrac 23 x^{3/2} - \tfrac \pi 4)\right), \\
- \Bi'(-x) &\asymp x^{1/4} \left( \cos( \tfrac 23 x^{3/2} - \tfrac \pi 4) + \tfrac 7 {48} x^{-3/2} \sin( \tfrac 23 x^{3/2} - \tfrac \pi 4)\right), 
\end{split}\end{equation}
for $x \gg 1$. In particular, combining \eqref{e:airypos} and \eqref{e:airyneg} gives
\[
\exp( \tfrac 23 x_+^{3/2}) |\Ai(x)| + \exp(- \tfrac 23 x_+^{3/2})|\Bi(x)| \lesssim \langle x \rangle^{-1/4},
\]
\[
\exp( \tfrac 23 x_+^{3/2}) |\Ai'(x)| + \exp( -\tfrac 23 x_+^{3/2})|\Bi'(x)| \lesssim \langle x \rangle^{1/4}, 
\]
uniformly for $x \in \mathbb R$, where $x_+ = \max(x,0)$. Hence
\begin{equation}\label{e:jyzbigbound}
\begin{split}
 |J_\nu(\nu z)| &\lesssim \left|\frac \zeta {1-z^2}\right|^{1/4} e^{-\nu \xi_+}\left(\nu^{-1/2} \langle \zeta \rangle^{-1/4} + \nu^{-3/2} \langle \zeta \rangle^{-1/4} \right) \lesssim \nu^{-1/2} \langle z \rangle^{-1/2}e^{-\nu \xi_+}, \\
|Y_\nu(\nu z)| & \lesssim \left|\frac \zeta {1-z^2}\right|^{1/4}  e^{\nu \xi_+}\left(\nu^{-1/2} \langle \zeta \rangle^{-1/4} + \nu^{-3/2} \langle \zeta \rangle^{-1/4 } \right) \lesssim \nu^{-1/2} \langle z \rangle^{-1/2}e^{\nu \xi_+},
\end{split}
\end{equation}
uniformly for $\nu \gg 1$ and $z >0$, where  $\xi_+ = \xi$ as given by \eqref{e:xidef} when $z<1$, and $\xi_+ = 0$ when $z \ge 1$.

\section{Properties of $u_0$} \label{series construction u 0}

In this Appendix, we prove Lemma \ref{l:u0}, following \cite[Chapter 4, Section 1.1]{ya10}. Recall our notation from Section \ref{reduction section},
\begin{equation*}
\begin{gathered}
 \varphi_J(r) = r^{1/2} J_\nu(\lambda r), \qquad   \varphi_Y(r) = r^{1/2} Y_\nu(\lambda r), \\
 \nu = h^{-1}\big(m+\frac{h^2}{4} \big)^{1/2}, \quad m \ge -\frac{h^2}{4}, \quad \lambda = \frac{\sqrt{E+i\varepsilon}}{h}, \quad E \in [E_{\min}, E_{\max}], \quad \ep \ge 0.
 \end{gathered}
\end{equation*}
Then put $\varphi_0 = \varphi_J$ and define inductively
\begin{equation} \label{induct defn u0}
 \varphi_{n+1}(r) =  \frac{\pi}{2h^2} \int_0^r (\varphi_Y(r)\varphi_J(r') - \varphi_J(r)\varphi_Y(r')) V(r')\varphi_n(r')dr'.
\end{equation}
We shall prove suitable estimates on the $\varphi_n$ to be able to put
\begin{equation}
\label{e:u0}
 u_0(r) = \sum_{n=0}^\infty \varphi_n(r),
\end{equation}
with the series and its first derivative converging uniformly as $(r, E, \ep)$ vary in compact subsets of $\R_+ \times [E_{\min}, E_{\max}] \times  [0,\infty)$. 

\begin{proof}[Proof of Lemma \ref{l:u0}]
To prove the lemma, we recall several estimates on $\varphi_Y$ and $\varphi_J$.  Given $r^*>0$, by \cite[Sections, 10.6 (ii), 10.7(i)]{dlmf} there is $C_0$ depending continuously on $r^* > 0$, $\nu\geq 0$,  $\delta>0$, and $\lambda \neq 0$ such that
\begin{equation}
\label{e:basicEst}
\begin{gathered} |\varphi_J(r)|r^{-\nu-\frac 12} + |\varphi_Y(r)|r^{\nu +\delta-\frac 12}\le C_0\\
|\varphi'_J(r)|r^{-\nu+\frac 12} + |\varphi'_Y(r)|r^{\nu+\delta+\frac 12}\le C_0\\
 |\varphi''_J(r)|r^{-\nu+\frac 32} + |\varphi''_Y(r)|r^{\nu+\delta+\frac 32}\le C_0
\end{gathered} , \qquad r \in (0,r^*].
\end{equation}
Next,
\begin{equation}
|\varphi_0(r)| \le C_0r^{\nu + \frac 12},\qquad |\varphi_0'(r)| \le C_0r^{\nu - \frac 12},\qquad |\varphi_0''(r)| \le C_0r^{\nu - \frac 32}, \qquad r\in (0,r^*].
\end{equation}
To see that the series~\eqref{e:u0} converges uniformly on compact sets, we claim that
\begin{equation}
\label{varphiEst}
 |\varphi_n(r)| \le C_0 \cdots C_n r^{\nu+ n(2 -\delta)+ \frac 12}, \qquad r\in (0,r^*].
\end{equation}
where
\begin{equation} \label{C n}
 C_n =  C_0^2 \pi \sup |V| \frac{2\nu + (2n-1)(2-\delta)+2}{2 h^2 (2-\delta)n(2\nu+(n-1)(2-\delta)+2)}.
\end{equation}
Once we have~\eqref{varphiEst}, since $\lim C_n =0$, the Weierstrass and ratio tests shows that the convergence of \eqref{e:u0} is uniform as $(r, E, \ep)$ vary in compact subsets of $\R_+ \times [E_{\min}, E_{\max}] \times  [0,\infty)$. Moreover, since $\varphi_0\sim r^{\nu+\frac{1}{2}}$ as $r \to 0$, $u_0 > 0$ near zero.

To obtain~\eqref{varphiEst}, we start with $n=1$,
\[\begin{split}
\frac{2 h^2}{C_0^3 \pi \sup|V|}&|\varphi_1(r)| \\
&\le r^{-\nu -\delta+ \frac 12} \int_0^r (r')^{\nu+\frac 12} (r')^{\nu+\frac 12} dr' + r^{\nu + \frac 12} \int_0^r (r')^{-\nu-\delta+\frac 12} (r')^{\nu+\frac 12} dr'\\
&= (2\nu+2)^{-1} r^{\nu + \frac 52-\delta} + (2-\delta)^{-1} r^{\nu + \frac 52-\delta} = \frac {2\nu+4-\delta}{(2\nu+2)(2-\delta)} r^{\nu + \frac 52-\delta}, \qquad r\in (0,r^*],
\end{split}\]
or
\[
 |\varphi_1(r)| \le C_0C_1 r^{\nu+\frac 52-\delta}, \qquad r\in (0,r^*],
\]
where $C_1 = C_0^2\sup|V| (2\nu+4-\delta)/(2 \pi h^2 (2\nu+2)(2-\delta))$. 

Now, suppose the claim~\eqref{varphiEst} holds with $n=k$ for some $k\geq 1$, with $C_k$ given by \eqref{C n}. Then,
\[\begin{split}
&\frac{2 h^2}{C_0C_1C_2\dots C_{k}C_0^2 \pi \sup|V|}|\varphi_{k+1}(r)| \\
&\le r^{-\nu + \frac 12-\delta} \int_0^r (r')^{\nu+\frac 12} (r')^{\nu+k(2-\delta)+\frac 12} dr' + r^{\nu + \frac 12} \int_0^r (r')^{-\nu+\frac 12-\delta} (r')^{\nu+k(2-\delta)+\frac 12} dr'\\
&= (2\nu+k(2-\delta)+2)^{-1} r^{\nu +2(k+1)+ \frac 12} + ((k+1)(2-\delta))^{-1} r^{\nu +(k+1)(2-\delta)+ \frac 12}, \qquad r\in (0,r^*], 
\end{split}\]
or
\[
 |\varphi_{k+1}(r)| \le C_0 C_1\dots C_{k+1} r^{\nu +(k+1)(2-\delta)+ \frac 12}, \qquad r\in (0,r^*],
\]
where 
\begin{equation*}
C_{k+1} = C_0^2  \pi \sup |V| \frac{2\nu + (2k+1)(2-\delta)+2}{2 h^2 (2-\delta)(k+1)(2\nu+k(2-\delta)+2)}.
\end{equation*}
  In particular, \eqref{varphiEst} holds by induction.

To see that the first derivative of~\eqref{e:u0} converges uniformly on compact sets, observe that for $n\geq 0$,
\begin{equation} \label{varphiprimeEst}
\begin{split}
\frac{|\varphi_{n+1}'(r)|}{\sup |V|} &= \frac{\pi}{2 \sup |V| h^2}\Big| \int_0^r (\varphi_Y'(r)\varphi_J(r') - \varphi_J'(r)\varphi_Y(r')) V(r')\varphi_n(r')dr'\Big|\\
&\leq (2\pi h^2)^{-1}C_0\dots C_n C_0^2\Big(r^{-\nu-\frac{1}{2}-\delta}\int_0^r (r')^{2\nu+n(2-\delta)+1})dr'+r^{\nu-\frac{1}{2}}\int_0^r (r')^{n(2-\delta)+1-\delta}dr'\Big)\\
&\leq (2\pi h^2)^{-1}C_0\dots C_n C_0^2r^{\nu+n(2-\delta)+\frac{3}{2}-\delta}\Big(\frac{1}{2\nu+n(2-\delta)+2}+\frac{1}{(n+1)(2-\delta)}\Big)\\
&\leq C_0\cdots C_{n+1}r^{\nu+(n+1)(2-\delta)-\frac{1}{2}}, \qquad r\in (0,r^*].
\end{split}
\end{equation}
Now \eqref{u0 integral eqn}  and \eqref{u0 solves eqn} follow from \eqref{varphiEst} and \eqref{varphiprimeEst} by summing \eqref{induct defn u0} from $n = 0$ to $n = N$ and sending $N \to \infty$.

Next, to see that ~\eqref{uAsymptotic} holds, observe that for any $\delta>0$,
$$
\sum_{n=1}^\infty \varphi_n(r) \leq C_{\nu,\delta}r^{\nu+\frac{5}{2}-\delta},\qquad \sum_{n=1}^\infty \varphi_n'(r)\leq C_{\nu,\delta}r^{\tfrac{3}{2}-\delta}.
$$
Therefore, 
\begin{align*}
\lim_{r\to 0^+}\big( \tfrac{n-1}{2r}  u_0(r) - u_0'(r) \big)r^{-\nu+1/2}&=\lim_{r\to 0^+}\big( \tfrac{n-1}{2r} \varphi_0(r) - \varphi_0'(r) \big)r^{-\nu+1/2}\\
&=\lim_{r\to 0^+}\big( \tfrac{n-2}{2}J_\nu(\lambda r)r^{-1/2} - \lambda J_\nu'(\lambda r) r^{1/2} \big)r^{-\nu+1/2}\\
&=\frac{\lambda^\nu}{2^\nu\Gamma(\nu+1)}\big(\frac{n-2}{2}- \nu\big).
\end{align*}
Since $\nu\geq 0$,~\eqref{uAsymptotic} holds.\\
\end{proof}

Now that we have Lemma~\ref{l:u0}, we show that $u_0$ has useful monotonicity properties when $m$ is large enough. Recall the defining property \eqref{M 0 condition} of $M_0$ and consider $m\ge M_0$. 

\begin{lemma}
\label{nonnegativity}
Let $M_0$ as in~\eqref{M 0 condition} and $R_1$ as in~\eqref{e:r1}. Then for $m \ge M_0$, $u_0(r), u_0'(r)\geq 0$ for $r \in (0,R_0]$. 
\end{lemma}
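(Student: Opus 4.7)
The plan is to take $\ep=0$ (so that $u_0$ is real-valued) and combine the positive near-zero asymptotics of $u_0$ from Lemma~\ref{l:u0} with the ODE
$$h^2 u_0''(r) = (V(r) + m r^{-2} - E)\, u_0(r),$$
whose right-hand coefficient is $\ge 0$ a.e.\ on (a neighborhood of) $(0,R_0]$ by the definition of $M_0$ in~\eqref{M 0 condition} and the assumption $m \ge M_0$. The whole argument is a propagation-of-positivity argument starting from $r = 0^+$.

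First, I would record the behavior near zero. By the leading term $\varphi_0(r) = r^{1/2}J_\nu(\lambda r) \sim c_\nu r^{\nu+\tfrac12}$ with $c_\nu > 0$, together with the bounds~\eqref{varphiEst} and~\eqref{varphiprimeEst} on the correction terms $\varphi_n$ for $n \ge 1$, one has $u_0(r) > 0$ and $u_0'(r) > 0$ on some interval $(0,r_0]$. Explicitly, $u_0(r) = c_\nu r^{\nu+\tfrac12}(1+o(1))$ and $u_0'(r) = c_\nu (\nu+\tfrac12) r^{\nu-\tfrac12}(1+o(1))$ as $r \to 0^+$, where $\nu + \tfrac12 > 0$ since $\nu \ge 0$.

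Next, because $u_0$ and $u_0'$ are absolutely continuous on compact subsets of $(0,\infty)$ (as $u_0 \in \dom(P_m^*)$), the ODE may be integrated in the form
$$u_0'(r_2) - u_0'(r_1) \;=\; h^{-2}\int_{r_1}^{r_2} \bigl(V(s) + m s^{-2} - E\bigr)\, u_0(s)\, ds, \qquad 0 < r_1 \le r_2 \le R_0.$$
In particular, on any subinterval where $u_0 \ge 0$, the function $u_0'$ is nondecreasing.

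Now set $r_* \defeq \sup\{\, r \in (0,R_0] : u_0 > 0 \text{ on } (0,r)\,\}$, which satisfies $r_* \ge r_0 > 0$ by the first step. Suppose for contradiction that $r_* < R_0$. Then $u_0 > 0$ on $(0,r_*)$ and $u_0(r_*) = 0$ by continuity. The integrated ODE then yields that $u_0'$ is nondecreasing on $(0,r_*)$, so $u_0'(r) \ge u_0'(r_0) > 0$ for $r \in [r_0, r_*)$, and $u_0' > 0$ on $(0,r_0]$ by the near-zero asymptotics. Thus $u_0$ is strictly increasing on $(0,r_*)$, forcing $u_0(r_*) \ge u_0(r_0) > 0$, a contradiction. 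Hence $r_* = R_0$, and taking $r_2 \to R_0^-$ in the integrated ODE preserves $u_0' \ge 0$, so $u_0(R_0) \ge 0$ and $u_0'(R_0) \ge 0$ by continuity. The only delicate point is verifying strict positivity of $u_0'$ near zero (and in particular the sign of the leading coefficient $c_\nu$); everything else is the standard ODE monotonicity argument driven by the sign of $V + mr^{-2} - E$.
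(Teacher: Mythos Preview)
Your proof is correct and follows essentially the same approach as the paper: both use that $V + mr^{-2} - E \ge 0$ on $(0,R_0]$ to get $u_0 u_0'' \ge 0$, then propagate the near-zero positivity of $u_0$ (from Lemma~\ref{l:u0}) across $(0,R_0]$. The only cosmetic difference is that the paper packages the propagation step as an abstract convexity lemma (``$f(a)=0$, $ff''\ge 0 \Rightarrow f$ has fixed sign and $f(b),f'(b)$ share that sign'') proved via an extremum/MVT argument, whereas you carry out the equivalent continuation argument directly using the integrated ODE and monotonicity of $u_0'$.
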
 
\begin{proof}
First, by Lemma~\ref{l:u0}, we have
 $$u''_0 = -h^{-2}(E - V - mr^{-2})u_0.$$
 Using $m \ge M_0$ and the definition \eqref{M 0 condition}, $E - V - mr^{-2} \leq 0$ on $(0, R_0]$. Hence $u_0(r)u_0''(r)\geq 0$ on $(0, R_0]$. Since also $u_0 (0) = 0$ and $u_0(r) > 0$ for $r>0$ small enough, the proof is completed by the following lemma. \\
 \end{proof}
\begin{lemma}
Suppose $f, f' \in AC_{\loc}(a,b)$, $f \in C([a,b];\mathbb{R})$, $f(a) = 0$, $f' \in C(a,b]$, and $f''(t)f(t)\geq 0$, for almost every in $t\in(a,b)$ in the sense of Lebesgue measure. Then $f$ has a fixed sign in $[a,b]$ and $f(b)$, $f'(b)$ have the same sign. 
\end{lemma}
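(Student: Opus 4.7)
The plan is to show that $h \defeq f^2$ is a non-negative, non-decreasing, convex function on $[a,b]$ with $h(a) = 0$; both conclusions of the lemma then follow quickly from this.

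First I would establish convexity of $h$. On any compact subinterval $[\alpha,\beta] \subset (a,b)$, both $f$ and $f'$ lie in $AC[\alpha,\beta]$ and are bounded, so $h = f^2 \in AC[\alpha,\beta]$ with $h' = 2ff'$ almost everywhere, and in turn $h' \in AC[\alpha,\beta]$ with
\[
h'' = 2(f')^2 + 2ff'' \geq 0 \qquad \text{a.e.}
\]
by the hypothesis $ff'' \geq 0$. Hence $h'$ is non-decreasing on $(a,b)$, and combined with the continuity of $h$ on all of $[a,b]$ (from $f \in C[a,b]$) this gives that $h$ is convex on $[a,b]$.

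Next, convexity of $h$ together with $h \geq 0$ and $h(a) = 0$ forces $h$ to be non-decreasing on $[a,b]$: if one had $a < t_1 < t_2$ with $h(t_2) < h(t_1)$, monotonicity of chord slopes for a convex function would imply
\[
0 \leq \frac{h(t_1)}{t_1 - a} \leq \frac{h(t_2) - h(t_1)}{t_2 - t_1} < 0,
\]
a contradiction. From $h$ non-decreasing, the left derivative of $h$ at $b$ exists and is non-negative; using the continuity of $f'$ at $b$ and the identity $f(b) - f(t) = \int_t^b f'$ to identify this left derivative with $2f(b)f'(b)$ gives $f(b)f'(b) \geq 0$, i.e., $f(b)$ and $f'(b)$ share a sign. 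For the fixed-sign claim, I would argue by contradiction: if $f(s_1) > 0 > f(s_2)$ for some $s_1, s_2 \in (a,b]$, the intermediate value theorem yields $c$ strictly between $s_1$ and $s_2$ with $f(c) = 0$. Then $h(a) = h(c) = 0$, and since $h$ is convex and non-negative, the chord from $(a,0)$ to $(c,0)$ bounds $h$ above on $[a,c]$, forcing $h \equiv 0$, hence $f \equiv 0$, on $[a,c]$; but whichever of $s_1, s_2$ is smaller than $c$ lies in $(a,c)$ with $f \neq 0$, a contradiction.

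I expect the main obstacle to be the regularity bookkeeping in the first step: tracing how the hypotheses $f, f' \in AC_{\loc}(a,b)$ combine with $f \in C[a,b]$ and $f' \in C(a,b]$ to legitimize the pointwise a.e.\ identity for $h''$ and yield convexity of $h$ on the closed interval $[a,b]$ rather than only its interior. Once the convexity of $f^2$ is in hand, the rest reduces to elementary geometric facts about non-negative convex functions vanishing at the left endpoint.
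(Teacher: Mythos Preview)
Your proof is correct and takes a genuinely different route from the paper's. The paper argues directly with $f$: it supposes an interior extremum $x^*$ exists, uses the sign condition on $f''$ near $x^*$ together with the mean value theorem to derive a contradiction, concludes the extrema lie at the endpoints (so $f$ has a fixed sign), and then applies the mean value theorem once more together with $f'(b) = f'(\bar x) + \int_{\bar x}^b f''$ to match the signs of $f(b)$ and $f'(b)$. Your approach instead packages the hypothesis $ff'' \ge 0$ into convexity of $h = f^2$, from which both conclusions drop out with no case analysis: monotonicity of $h$ gives $f(b)f'(b) \ge 0$ via the left derivative at $b$, and convexity plus the zero at $a$ forces any interior zero of $f$ to propagate as $f \equiv 0$ on an initial segment, ruling out a sign change. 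Your argument is cleaner and more robust (it sidesteps the interior-extremum casework and the somewhat delicate use of the mean value theorem under the given regularity), while the paper's has the virtue of being entirely phrased in terms of $f$ itself and making the role of $f''$ at the extremum transparent. The regularity bookkeeping you flag is routine: $f, f' \in AC_{\loc}(a,b)$ bounded on compacta gives $h' = 2ff' \in AC_{\loc}(a,b)$ with $h'' \ge 0$ a.e., hence $h'$ non-decreasing on $(a,b)$, and continuity of $h$ at the endpoints extends convexity to $[a,b]$.
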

\begin{proof}
If $f$ is identically 0 on $[a,b]$, then the claim is trivially true. Therefore, we will assume $f$ is not identically 0.
Assume $f$ attains its extremum $L \neq 0$ at an interior point of $(a,b)$. Replacing $f$ by $-f$ if necessary, we may assume $L > 0$. Set 
\begin{equation*}
x^* = \inf \{x \in (a,b) : f(x) = L \}.
\end{equation*}
Then $f > 0$ near $x^*$, hence $f'' \ge 0$ almost everywhere, in a neighborhood of $x^*$. Also $f'(x^*) = 0$. Now, for $x$ sufficiently close to but less than $x^*$,
\begin{equation*}
-f'(x) = \int^{x^*}_x f''(s) ds \ge 0 \implies f'(x) \le 0.
\end{equation*}

But then, by the mean value theorem, for $a < x < x^*$ and some $\overline{x} \in (x, x^*)$, 
\begin{equation*}
0 < f(x^*) - f(x) = f'(\overline{x})(x^* - x) \le 0,
\end{equation*}
a contradiction. 

So the extrema of $f$ must occur at the endpoints: $f(b)$ is the maximum or minimum of $f$ on $[a,b]$, and $f(a) = 0$ is the minimum or maximum, respectively. So $f$ has a fixed sign, hence also $f''$ has this sign almost everywhere.

Using the mean value theorem again, 
\begin{equation*}
\frac{f(b) - f(a)}{b - a} = \frac{f(b)}{b - a} = f'(\overline{x}), \qquad \text{some $\overline{x} \in (a,b).$}
\end{equation*}
 So $f(b)$ and $f'(\overline{x})$ have the same sign, so 
 \begin{equation*}
 f'(b) = f'(\overline{x}) + \int_{\overline{x}}^b f''(s) ds
 \end{equation*}
has the same sign as $f(b)$ as well.\\
\end{proof}

\section{Consequences of Meshkov's example for resolvent estimates}
\label{s:mesh}
Recall from Fredholm theory \cite[Theorems 3.8 and equation 6.0.2]{dz}  that, for $V \in L^\infty_{\text{comp}}(\R^n ; \C)$, the cutoff resolvent  $\chi (-h^2 \Delta + V -z)^{-1} \chi$, 
$\chi \in C_0^\infty(\R^n)$, is meromorphic in $(E_{\min}/2, 2E_{\max}) + i (0, \infty)$ and continues meromorphically to $(E_{\min}/2, 2E_{\max}) + i \R$.

To see that the optimal power in a resolvent estimate for $L^\infty$, compactly supported, complex valued potentials is $h^{-4/3}$, recall that~\cite{me92} constructs a \emph{complex-valued} potential $V\in L^\infty(\mathbb{R}^2)$ and a function $u$ such that 
$$
(-\Delta+V)u=0,\quad\text{in }\mathbb{R}^2,\qquad |u(x)|\leq C\exp(-C|x|^{4/3}),\qquad \|u\|_{L^2(\R^n)}=1.
$$
Changing variables, $y=hx$, and putting $w(y)=u(h^{-1}y)$, $V_h(y)=V(h^{-1}y)$, we have \\ $\|V_h\|_{L^\infty}\leq C<\infty$, 
$$
(-h^2\Delta +V_h)w=0,\qquad |w(y)|\leq C\exp(-C|y|h^{-4/3}).
$$
Let $\chi \in C_0^\infty(B(0,3))$ with $\chi \equiv 1$ near $B(0,2)$ and $\tilde{\chi}\in C_0^\infty(B(0,3))$ with $\tilde{\chi}\equiv 1$ on $\supp \chi$. Then, for $E\in [E_{\min}, E_{\max}]$,
\begin{align*}
\|\tilde{\chi}(-h^2\Delta& + (V_h+E)-E)(\chi w)\|_{L^2(\R^n)}=\|[-h^2\Delta,\chi]w\|_{L^2(\R^n)}\\
&\leq Ch\|w\|_{H_h^1(B(0,3)\setminus B(0,2))}\\
&\leq Ch(\|-h^2\Delta w\|_{L^2(B(0,4)\setminus B(0,1))}+\|w\|_{L^2(B(0,4)\setminus B(0,1))})\\
&\leq Ch(\|V_hw\|_{L^2(B(0,4)\setminus B(0,1))}+\|w\|_{L^2(B(0,4)\setminus B(0,1))})\leq Che^{-Ch^{-4/3}},
\end{align*}
For $h$ small enough,
$$\|\chi w\|_{L^2}\geq h-h\|u\|_{L^2(\mathbb{R}^2\setminus B(0,h^{-1})}\geq \tfrac{h}{2}.$$
Therefore, if the cutoff resolvent  $\tilde{\chi} (-h^2\Delta+\tilde{\chi}(V_h+E)-E-i0)^{-1}\tilde{\chi}$ exists, it must have 
$$
\|\tilde{\chi}(-h^2\Delta+\tilde{\chi}(V_h-E)-E-i0)^{-1}\tilde{\chi}\|_{L^2(\R^n) \to L^2(\R^n) }\geq C\exp(Ch^{-4/3}).
$$

\end{document}